\def\TSG{{\mathrm{TSG}_+}}
\def\fix{{\mathrm{fix}}}
\def\Aut{{\mathrm{Aut}}}
\newtheorem{prop}{Proposition}
\newtheorem{fact}{Fact}
\newtheorem{thm}{Theorem}
\newtheorem{lemma}{Lemma}
\newtheorem*{edge}{Edge Embedding Lemma}
\newtheorem*{subgroup}{Subgroup Theorem}
\newtheorem*{subgroupcor}{Subgroup Corollary}
\newtheorem*{automorphism}{Automorphism Theorem}
\newtheorem*{isometry}{Isometry Theorem}
\newtheorem*{A4}{Bipartite $A_4$ Theorem}
\newtheorem*{S4}{Bipartite $S_4$ Theorem}
\newtheorem*{A5}{Bipartite $A_5$ Theorem}
\newtheorem*{fixed vertex}{Fixed Vertex Property}
\def\inv{{^{-1}}}
\def\Z{{\mathbb Z}}
\def\g{{\gamma}}
\def\f{{\phi}}
\def\TSG{{\mathrm{TSG_+}}}
\def\Aut{{\mathrm{Aut}}}
\def\Diff{{\mathrm{Diff_+}}}
\def\fix{{\mathrm{fix}}}
\def\so{{\mathrm{SO}}}
\newcommand{\orb}[1]{\langle #1 \rangle}
\begin{document}
\title[Bipartite Polyhedral]{Complete bipartite graphs whose topological symmetry groups are polyhedral}
\author{Blake Mellor}

\subjclass{57M25, 05C10}

\keywords{topological symmetry groups, spatial graphs}

\address{Department of Mathematics, Loyola Marymount University, Los Angeles, CA 90045, USA}

\date \today

\thanks{This research was supported in part by NSF Grant DMS-0905687.}

\begin{abstract}

We determine for which $n$, the complete bipartite graph $K_{n,n}$ has an embedding in $S^3$ whose topological symmetry group is isomorphic to one of the polyhedral groups: $A_4$, $A_5$, or $S_4$.

\end{abstract}

\maketitle

\section{Introduction}

The {\em topological symmetry group} was introduced by Jon Simon \cite{si} to study symmetries of polymers and other flexible molecules for which the traditional point group (i.e. the group of rigid symmetries) is insufficient.  We consider an abstract graph $\gamma$ with automorphism group $\Aut(\gamma)$, and let $\Gamma$ be an embedding of $\gamma$ in $S^3$.  The {\em topological symmetry group} of $\Gamma$, denoted $\mathrm{TSG}(\Gamma)$, is the subgroup of $\Aut(\gamma)$ induced by homeomorphisms of the pair $(S^3, \Gamma)$.  The {\em orientation preserving topological symmetry group} of $\Gamma$, denoted $\TSG(\Gamma)$, is the subgroup of $\Aut(\gamma)$ induced by by orientation preserving homeomorphisms of the pair $(S^3, \Gamma)$.  In this paper we are only concerned with $\TSG(\Gamma)$, so we will refer to it as simply the {\em topological symmetry group}.

It has long been known that every finite group can be realized as $\Aut(\gamma)$ for some graph $\gamma$ \cite{fr}.  However, this is {\em not} true for topological symmetry groups.  Results of Flapan, Naimi, Pommersheim and Tamvakis \cite{fnpt}, in combination with the recent proof of the Geometrization Conjecture \cite{mf}, show that a group is the topological symmetry group for some embedding of a 3-connected graph if and only if it is a finite subgroup of $\so(4)$.  However, their results do not give any information as to which graphs can be used to realize any particular group.  The first results along these lines have been for the family of complete graphs $K_n$.  Flapan, Naimi and Tamvakis \cite{fnt} classified the groups which could be realized as the topological symmetry group for an embedding of a complete graph; subsequently, Flapan, Naimi and the author determined exactly which complete graphs had embeddings that realized each group. \cite{fmn2, fmn3}.

In this paper we turn to another well-known family of graphs, the {\it complete bipartite graphs} $K_{n,n}$.  Unlike the complete graphs, where only some of the subgroups of $\so(4)$ are realizable as topological symmetry groups, {\em any} finite subgroup of $\so(4)$ can be realized as the topological symmetry group of an embedding of some $K_{n,n}$ \cite{fnpt}.  So the complete bipartite graphs are a natural family of graphs to investigate in order to better understand the full range of possible topological symmetry groups.  The finite subgroups of $\so(4)$ have been classified and they can all be described as quotients of products of cyclic groups $\Z_n$, dihedral groups $D_n$, and the symmetry groups of the regular polyhedra ($A_4$, $S_4$ and $A_5$) \cite{du}.  As a first step in understanding the topological symmetry groups of complete bipartite graphs, it makes sense to first consider these building blocks.  In this paper, we will consider the polyhedral groups $A_4$, $S_4$ and $A_5$; in later work we will consider cyclic and dihedral groups \cite{hmp}.  We will prove the following three theorems:

\begin{A4}
A complete bipartite graph $K_{n,n}$ has an embedding $\Gamma$ in $S^3$ such that  $\TSG(\Gamma) \cong A_4$ if and only if $n \equiv 0$, $2$, $4$, $6$, $8 \pmod {12}$ and $n \geq 4$.
\end{A4}

\begin{S4}
A complete graph $K_{n,n}$ has an embedding $\Gamma$ in $S^3$ such that  $\TSG(\Gamma) \cong S_4$ if and only if $n \equiv 0$, $2$, $4$, $6$, $8 \pmod {12}$, $n \geq 4$ and $n \neq 6$.
\end{S4}

\begin{A5}
A complete graph $K_{n,n}$ has an embedding $\Gamma$ in $S^3$ such that $\TSG(\Gamma) \cong A_5$ if and only if $n \equiv 0$, $2$, $12$, $20$, $30$, $32$, $42$, $50 \pmod{60}$ and $n > 30$.
\end{A5}

\section{Necessity of the conditions}

In this section we will prove the necessity of the conditions given in the Bipartite $A_4$, $S_4$ and $A_5$ Theorems.  We begin with a few results proved elsewhere which we will find useful.  The first is the observation that conjugate elements of $A_4$, $S_4$ or $A_5$ fix the same number of vertices of a graph.  We will frequently refer to the elements of these groups of order 2 as {\it involutions}.

\begin{fixed vertex} \cite{fmn2}
Let $G\cong A_4$, $A_5$ and suppose $G$ acts faithfully on a graph $\Gamma$.  Then all elements of $G$ of a given order fix the same number of vertices.  Furthermore, since all of the non-trivial elements of $G$ have prime order, all of the elements in a given cyclic subgroup fix the same vertices.  

Let $H$ be isomorphic to $S_4$ and suppose that $H$ acts faithfully on $\Gamma$.  Then all elements of $H$ of order 3 fix the same number of vertices, and all elements of $H$ of order 4 fix the same number of vertices.  All involutions of $H$ which are in $G\cong A_4$ fix the same number of vertices, and all involutions of $H$ which are not in $G$ fix the same number of vertices.
\end{fixed vertex}

The following result allows us to assume that the topological symmetry group is induced by elements of $\so(4)$ -- namely, rotations and glide rotations.  It follows from \cite{fnpt}, together with the recent proof of the Geometrization Theorem \cite{mf}.

\begin{isometry} \cite{fnpt}
Let $\Gamma$ be an embedded 3-connected graph, and let $H = \TSG(\Gamma)$.  Then $\Gamma$ can be re-embedded as $\Gamma'$ such that $H \leq \TSG(\Gamma')$ and $\TSG(\Gamma')$ is induced by an isomorphic subgroup of $\so(4)$.
\end{isometry}

From the Isometry Theorem, we may assume that the elements of the topological symmetry group are induced by rotations (with fixed point set a geodesic circle) and glide rotations (with no fixed points).  The fixed point sets of two rotations are hence either disjoint, intersect in exactly two points, or are identical.

We now look specifically at complete bipartite graphs $K_{n,n}$.  Recall that the graph $K_{n,n}$ consists of two vertex sets $V$ and $W$, each containing $n$ vertices, such that every vertex in $V$ is adjacent to every vertex in $W$; but no two vertices of $V$ are adjacent, and no two vertices of $W$ are adjacent.  The following observation describes the automorphisms of a complete bipartite graph $K_{n,n}$.

\begin{fact} \label{L:automorphisms}
Let $V$ and $W$ be the vertex sets of $K_{n,n}$, and let $\f: V \cup W \rightarrow V \cup W$ be a permutation of the vertices.  Then $\f$ is an automorphism of the graph if and only if either $\f(V) = V$ or $\f(V) = W$.
\end{fact}

The next theorem classifies the automorphisms of $K_{n,n}$ that can be induced by orientation-preserving homeomorphisms of $S^3$.

\begin{automorphism} \cite{flmpv}
Let $n>2$ and let $\varphi$ be an order $r$ automorphism of a complete bipartite graph $K_{n,n}$ with vertex sets $V$ and $W$.  There is an embedding $\Gamma$ of $K_{n,n}$ in $S^3$ with an orientation preserving homeomorphism $h$ of $(S^3,\Gamma)$ inducing $\varphi$ if and only if all vertices are in $r$-cycles except for the fixed vertices and exceptional cycles explicitly mentioned below (up to interchanging $V$ and $W$):

\begin{enumerate} 
\item There are no fixed vertices or exceptional cycles.

\item $V$ contains $mr$ fixed vertices for some positive integer $m$.

\item $V$ and $W$ each contain 1 fixed vertex or each contain 2 fixed vertices.

\item $j|r$ and $V$ contains some $j$-cycles.

\item $r=\mathrm{lcm}(j,k)$, and $V$ contains some $j$-cycles and $k$-cycles.

\item $r=\mathrm{lcm}(j,k)$, and $V$ contains some $j$-cycles and $W$ contains some $k$-cycles.

\item $V$ and $W$ each contain one 2-cycle.

\item $\frac{r}{2}$ is odd, $V$ and $W$ each contain one 2-cycle, and $V$ contains some $\frac{r}{2}$-cycles.

\item $\varphi(V)=W$ and $V\cup W$ contains one 4-cycle.

\end{enumerate}

\end{automorphism}

The following lemmas will place further restrictions on the automorphisms which are contained in a polyhedral subgroup.

\begin{lemma} \label{L:f(V)=V}
Let $\Gamma$ be an embedding of $K_{n,n}$ such that $G \leq \TSG(\Gamma)$, where $G$ is isomorphic to $A_4$ or $A_5$.  Then every element of $G$ fixes $V$ (and hence $W$) setwise.
\end{lemma}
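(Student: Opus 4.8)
The plan is to reduce this geometric-sounding statement to a purely group-theoretic fact: neither $A_4$ nor $A_5$ admits a nontrivial homomorphism onto $\Z_2$. The bridge between the two is a parity homomorphism recording whether an automorphism preserves or interchanges the two sides of the bipartition.

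First I would use Fact \ref{L:automorphisms}. Since $G \leq \TSG(\Gamma) \leq \Aut(K_{n,n})$, every element $\varphi$ of $G$ is an automorphism of $K_{n,n}$, so it satisfies either $\varphi(V) = V$ or $\varphi(V) = W$. This dichotomy lets me define a function $\psi \colon G \to \Z_2$ by setting $\psi(\varphi) = 0$ when $\varphi(V) = V$ and $\psi(\varphi) = 1$ when $\varphi(V) = W$. The next step is to check that $\psi$ is a group homomorphism: the composition of two partition-preserving automorphisms, or of two partition-interchanging automorphisms, preserves the partition, whereas composing one of each interchanges it. This matches addition in $\Z_2$, so $\psi$ is indeed a homomorphism, and its kernel $\psi^{-1}(0)$ is exactly the set of elements of $G$ that fix $V$ (equivalently $W$) setwise.

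The lemma therefore amounts to the assertion that $\psi$ is the trivial homomorphism. To see this, I would invoke the structure of $G$. When $G \cong A_4$, its commutator subgroup is the Klein four-group, so the abelianization is $A_4/V_4 \cong \Z_3$; any homomorphism into $\Z_2$ factors through the abelianization, and $\mathrm{Hom}(\Z_3,\Z_2)$ is trivial, so $\psi \equiv 0$. When $G \cong A_5$, simplicity and non-abelianness force the abelianization to be trivial, so again $\psi \equiv 0$. In either case every element of $G$ lies in $\ker \psi$ and hence fixes $V$, and therefore $W$, setwise.

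I do not expect a genuine obstacle here; the argument is short and the only points requiring care are verifying that $\psi$ is well defined and multiplicative (which rests entirely on Fact \ref{L:automorphisms}) and recalling the structural fact about $A_4$ and $A_5$. If I had to name the crux, it is simply the observation that the existence of a partition-swapping element would produce an index-$2$ subgroup of $G$, which neither $A_4$ nor $A_5$ possesses.
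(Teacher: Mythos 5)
Your proof is correct, and it takes a genuinely different (more structural) route than the paper's. The paper argues element by element: by Fact \ref{L:automorphisms} every automorphism either preserves or swaps $V$ and $W$, so any element of \emph{odd} order must fix $V$ setwise (a swapping element has all odd powers swapping, hence even order); then, invoking the presentations $A_4 = \langle \varphi, g : \varphi^2 = g^3 = (\varphi g)^3 = 1 \rangle$ and $A_5 = \langle \varphi, g : \varphi^2 = g^3 = (\varphi g)^5 = 1 \rangle$, it handles each involution $\varphi$ by finding an order-$3$ element $g$ with $\varphi g$ of odd order, whence $\varphi(V) = \varphi(g(V)) = (\varphi g)(V) = V$. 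Your argument packages the same dichotomy from Fact \ref{L:automorphisms} into the parity homomorphism $\psi \colon G \to \Z_2$ and then kills $\psi$ by pure group theory: the abelianization of $A_4$ is $\Z_3$ and $A_5$ is simple, so neither group has a subgroup of index $2$. Both proofs ultimately rest on the same fact --- $A_4$ and $A_5$ are generated by elements of odd order --- but yours avoids presentations and case analysis on involutions, and it generalizes verbatim to any group without an index-$2$ subgroup (e.g., any perfect group). It also makes transparent exactly why $S_4$ is absent from this lemma: $S_4$ does surject onto $\Z_2$, which is why the paper must work harder there, proving the setwise invariance of $V$ and $W$ for $S_4$ only under additional geometric hypotheses in Lemma \ref{L:TwoFixed}. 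The only points needing care in your writeup --- that $\psi$ is well defined (since $V$ and $W$ are disjoint and nonempty, the two alternatives in Fact \ref{L:automorphisms} are mutually exclusive) and multiplicative --- are exactly the ones you flagged and checked, so there is no gap.
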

\begin{proof}
By Fact \ref{L:automorphisms}, any element of odd order must fix $V$ setwise.  The only elements of $G$ of even order are involutions of order 2.  Recall that $A_4 = \langle \f, g : \f^2 = g^3 = (\f g)^3 = 1 \rangle$ and $A_5 = \langle \f, g : \f^2 = g^3 = (\f g)^5 = 1 \rangle$.  So if $\f \in G$ is an involution, then there an element $g \in G$ of order 3 such that $\f g$ has odd order.  So $g(V) = V$ and $\f g(V) = V$.  Hence $\f(V) = \f (g(V)) = V$.  So the involutions also fix $V$ setwise.
\end{proof}

\begin{lemma} \label{L:n2}
Let $G\leq \Aut(K_{n,n})$ which is isomorphic to $A_4$.  Suppose there is an embedding $\Gamma$ of $K_{n,n}$ in $S^3$ such that $G$ is induced on $\Gamma$ by an isomorphic subgroup $\widehat{G}\leq\so(4)$.   Then an order 2 element of $G$ cannot fix exactly one vertex of $V$.
\end{lemma}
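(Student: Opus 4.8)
The plan is to argue by contradiction. Suppose some order $2$ element $\f \in G$ fixes exactly one vertex of $V$; I would then show this forces two non-commuting elements of the isometry group $\widehat G \le \so(4)$ to share a fixed-point set, which is impossible.

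First I would pin down all the fixed vertices of $\f$. Since $\f$ has order $r=2$ and fixes an odd number (exactly one) of vertices of $V$, I would consult the Automorphism Theorem: case (1) has no fixed vertices, case (2) produces an even number $mr$ of fixed vertices in $V$, and the cases with $\varphi(V)=W$ are excluded by Lemma \ref{L:f(V)=V}. The only remaining possibility is case (3) with exactly one fixed vertex in each of $V$ and $W$, so $\f$ also fixes exactly one vertex of $W$; write $v\in V$ and $w\in W$ for these. Next I would upgrade $v$ and $w$ to fixed vertices of all of $G\cong A_4$. The three involutions of $A_4$ form a single conjugacy class and generate a Klein four-group, and by Lemma \ref{L:f(V)=V} every element of $G$ preserves $V$ and $W$ setwise; combined with the Fixed Vertex Property this shows each involution fixes exactly one vertex of $V$ and one of $W$. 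Because the involutions commute, a short argument using uniqueness (if $\f'$ is another involution, then $\f(v')$ is again fixed by $\f'$, forcing $\f(v')=v'$ and then $v'=v$) shows all three fix the same $v\in V$ and the same $w\in W$. Finally, for an order $3$ element $g$, conjugation $\f_i\mapsto g\f_i g\inv$ permutes the involutions, so $g(v)$ is fixed by every involution and lies in $V$; by uniqueness $g(v)=v$, and likewise $g(w)=w$. Since $G=\orb{\f,g}$, every element of $G$ fixes both $v$ and $w$.

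The contradiction then comes from the edge $e$ of $K_{n,n}$ joining $v\in V$ and $w\in W$. Every nontrivial $\widehat a\in\widehat G$ inducing $a\in G$ fixes the endpoints $v,w$ and maps $e$ to itself, so its restriction to the arc $e$ is a finite-order homeomorphism fixing both endpoints, hence the identity on $e$. By the Isometry Theorem $\widehat a$ is a rotation (it has fixed points, so it is not a glide rotation), so its fixed-point set is a geodesic circle, which must contain the whole arc $e$. Since two distinct geodesic circles in $S^3$ meet in at most two points while $e$ is an arc, the fixed circles of all these rotations coincide in a single circle $C$. In particular $\widehat\f$ and $\widehat g$ both fix $C$ pointwise, so both are rotations about $C$ and therefore commute; but $\f$ and $g$ do not commute in $A_4$, a contradiction.

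I expect the main obstacle to be this final geometric step: making precise that a finite-order homeomorphism preserving the embedded edge $e$ and fixing its endpoints must fix $e$ pointwise, and that two rotations sharing a fixed geodesic circle necessarily commute. The earlier combinatorial steps, namely isolating case (3) and propagating the fixed vertices across the conjugacy class via commutativity, are routine once the bookkeeping with Lemma \ref{L:f(V)=V} and the Fixed Vertex Property is in place.
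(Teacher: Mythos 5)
Your proposal is correct, and its first half is essentially the paper's argument: the same appeal to the Automorphism Theorem to get exactly one fixed vertex in each of $V$ and $W$, and the same commutation computation ($\f'(\f(v')) = \f(\f'(v')) = \f(v')$ plus uniqueness of the fixed vertex) to force all three involutions to share their fixed vertices. Where you diverge is the endgame. The paper stops at the involutions: having shown each involution is a rotation pointwise fixing one edge, and that commutativity forces all three involutions to fix the \emph{same} edge pointwise, it concludes they would all be the same order-$2$ rotation about the same axis, contradicting their distinctness. You instead keep going: you propagate the fixed pair $\{v,w\}$ to the order-$3$ elements by conjugation, conclude that every nontrivial element of $\widehat{G}$ pointwise fixes the arc $e = \overline{vw}$ (a finite-order homeomorphism of an arc fixing both endpoints is the identity --- a fact the paper also uses implicitly when it says the involution ``fixes the edge''), so all axes coincide in a single geodesic circle $C$, and then contradict the nonabelianness of $A_4$, since isometries of $S^3$ pointwise fixing $C$ restrict to rotations of the orthogonal $2$-plane and hence form an abelian $\mathrm{SO}(2)$. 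The two geometric facts you flagged as potential obstacles are both standard and correct as you state them. What your route buys is a contradiction that makes no use of the dichotomy ``distinct involutions cannot fix the same edge,'' at the cost of extra steps; note that your own argument already yields the paper's shorter contradiction midway through --- once all three involutions pointwise fix $e$, they are all the rotation by $\pi$ about the circle containing $e$, hence equal --- so the extension to order-$3$ elements, while valid, is not needed.
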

\begin{proof}
Suppose there is an involution which fixes exactly one vertex of $V$.  Then by the Automorphism Theorem, the involution also fixes one vertex of $W$, and hence the edge between them.  Since every element of $A_4$ fixes $V$ and $W$ setwise, the Fixed Vertex Property implies all involutions in $A_4$ fix the same number of vertices in $V$ and $W$, so each involution fixes exactly one edge and hence is induced by a rotation in $\widehat{G}$.  Two distinct involutions can't fix the same edge (or they would correspond to the same rotation in $\widehat{G}$), so there are three edges $e_1 = \overline{v_1w_1}$, $e_2 = \overline{v_2w_2}$, $e_3 = \overline{v_3w_3}$, each fixed pointwise by one involution.  Say that $\f_i$ is the involution fixing $e_i$ pointwise.  The involutions $\f_i$ all commute in $A_4$, so $\f_i(\f_j(v_i)) = \f_j(\f_i(v_i)) = \f_j(v_i)$.  So $\f_j(v_i)$ is fixed by $\f_i$.  But $\f_j(v_i) \in V$ by Lemma \ref{L:f(V)=V}, and $\f_i$ only fixes one vertex in $V$, so $\f_j(v_i) = v_i$.  Therefore, the three edges $e_1, e_2, e_3$ are all fixed pointwise by all three involutions.  This means the involutions are not distinct, which is a contradiction.
\end{proof}

\begin{lemma} \label{L:n2=4}
Let $G\leq \Aut(K_{n,n})$ which is isomorphic to $D_2 \cong \Z_2 \times \Z_2$.  Suppose there is an embedding $\Gamma$ of $K_{n,n}$ in $S^3$ such that $G$ is induced on $\Gamma$ by an isomorphic subgroup $\widehat{G}\leq\so(4)$.   Assume the involutions of $G$ each fix exactly two vertices of $V$ and two vertices of $W$.  Then the axes of the corresponding involutions in $\widehat{G}$ are concurrent at two points in $V$ (or two points in $W$).
\end{lemma}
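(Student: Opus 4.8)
The plan is to first identify the three involutions as rotations of $S^3$, then locate the common intersection of their axes using the rigidity of $\so(4)$, and finally invoke the edges of $K_{n,n}$ to force that intersection to consist of vertices. Write the three nontrivial elements of $G$ as $\f_1,\f_2,\f_3$ with $\f_3=\f_1\f_2$. Since each $\f_i$ fixes vertices in both $V$ and $W$, Fact~\ref{L:automorphisms} shows it fixes $V$ and $W$ setwise, and since its fixed point set is nonempty it is induced by a rotation $\widehat{\f_i}\in\widehat G$ whose axis $C_i$ is a geodesic circle, with the two fixed $V$--vertices and two fixed $W$--vertices of $\f_i$ all lying on $C_i$. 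Because $\widehat G$ is abelian, each $\widehat{\f_j}$ commutes with $\widehat{\f_i}$, hence preserves $\fix(\widehat{\f_i})=C_i$; as it also preserves $V$ and $W$, it permutes the two fixed $V$--vertices of $\f_i$ and, separately, the two fixed $W$--vertices.

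Next I would locate the common intersection of the three axes. Regarding the $\widehat{\f_i}$ as commuting order--$2$ elements of $\so(4)$, each is $+1$ on the $2$--plane containing $C_i$ and $-1$ on its orthogonal complement. Simultaneously diagonalizing the three, a short dimension count shows the four joint eigenspaces are each one--dimensional and, in particular, that there is a common $+1$--eigenline; its two unit vectors $p,p'$ therefore lie on all three axes. Thus $C_1\cap C_2=C_1\cap C_3=C_2\cap C_3=\{p,p'\}$, and for $i\neq j$ the restriction $\widehat{\f_j}|_{C_i}$ is a reflection of the circle $C_i$ with exactly the two fixed points $p,p'$. This already establishes concurrency of the three axes at $p$ and $p'$; what remains is to show that these two points are vertices lying in $V$ (or in $W$).

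Finally I would show that $\{p,p'\}$ is one of the fixed pairs. Applying the first paragraph to $\f_1$, the involution $\f_2$ either fixes the two $V$--vertices of $\f_1$ or swaps them, and likewise for the two $W$--vertices; and since $\widehat{\f_2}|_{C_1}$ fixes only $p,p'$, any vertex it fixes must be $p$ or $p'$. If either pair is fixed, that pair equals $\{p,p'\}$, so $p,p'$ are vertices (both in $V$ or both in $W$), fixed by all of $\widehat G$ and lying on all three axes, and we are done. The remaining possibility, that $\f_2$ swaps both pairs, is the main obstacle, and I would rule it out using the edges. An edge joining two $\f_1$--fixed vertices is carried to itself by $\widehat{\f_1}$ with both endpoints fixed, so (an involution of an arc fixing both endpoints is the identity) it must lie along the axis $C_1$; hence all four $V$--$W$ edges among the four fixed vertices lie on the circle $C_1$. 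But if $\f_2$ swaps both pairs, the reflection $\widehat{\f_2}|_{C_1}$ places each swapped pair symmetrically about $p,p'$, so around $C_1$ the two $V$--vertices are adjacent and the two $W$--vertices are adjacent; then only two of the four required edges can be disjoint arcs of $C_1$, and the remaining two cannot be embedded without crossing a vertex, a contradiction. Therefore one of the fixed pairs is $\{p,p'\}$, the two intersection points are vertices in $V$ (or in $W$), and the three axes are concurrent there. I want to stress that the linear algebra of the second paragraph is exactly what makes the edge argument work: it guarantees that $\widehat{\f_j}|_{C_i}$ is a reflection with two fixed points rather than a fixed--point--free rotation (which would make the cyclic order alternate and destroy the edge obstruction), while the edge--on--axis observation is what upgrades ``the axes meet at $p,p'$'' to ``they meet at vertices.''
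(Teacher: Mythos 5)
Your proof is correct, but it reorganizes the argument around a linear-algebra core that the paper does not use. The paper argues case-wise on how $\f_2$ acts on the fixed vertices of $\f_1$: it first rules out the ``swap both pairs'' case by noting that then the two fixed circles (the embedded $K_{2,2}$ subgraphs) share no vertices, hence are disjoint, hence linked (each being invariant under both rotations), so that $\widehat{\f_3}=\widehat{\f_1}\widehat{\f_2}$ would be a glide rotation with empty fixed set, contradicting that $\f_3$ fixes vertices; in the remaining case $\f_2$ fixes the $V$-pair (say) of $\f_1$, and then so does $\f_3=\f_2\f_1$, giving concurrency at those two vertices. You instead simultaneously diagonalize the three commuting $(2,2)$-involutions in $\so(4)$ and run the dimension count $a+b=a+c=a+d=2$, $a+b+c+d=4$, to get $a=1$ --- crucially using that $\f_3$ also has a fixed circle, which is exactly what excludes $\widehat{\f_2}=-\widehat{\f_1}$ and $\widehat{\f_3}=-I$, i.e.\ the paper's linked-circle configuration --- so that all three axes meet at the antipodal pair $p,p'$ \emph{before} you know these are vertices; you then kill the swap-both case by the cyclic-order obstruction on $C_1$ (the fixed $K_{2,2}$ lies pointwise on the axis, forcing the alternating order $v_1,w_1,v_1',w_1'$, which is incompatible with the mirror placement under the reflection $\widehat{\f_2}|_{C_1}$). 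Both proofs thus hinge on the same two facts (nonempty fixed set of $\f_3$, and fixed edges lying along the axis), but deploy them in opposite order: the paper's route is shorter and topological, matching the linking/glide-rotation toolkit it uses elsewhere, while yours makes the two concurrency points explicit as the unit vectors of the common $+1$-eigenline and is more self-contained. One remark: after your second paragraph you could finish faster --- since $p,p'\in C_1\cap C_2$ and $C_1$, $C_2$ are unions of embedded edges, and distinct edges of an embedding meet only at shared endpoints (while $C_1=C_2$ would force $\widehat{\f_1}=\widehat{\f_2}$), the points $p,p'$ must already be common fixed \emph{vertices}, making the reflection/cyclic-order analysis unnecessary; but your longer version is also sound, and your closing observation correctly identifies why the eigenvalue count (reflection rather than antipodal map on $C_1$) is what makes the edge obstruction bite.
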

\begin{proof}
Let $\f_i$, $1 \leq i \leq 3$, denote the three involutions.  Each involution fixes 2 vertices in each of $V$ and $W$, and hence fixes four edges.  Say that $\f_i$ fixes $\{v_i, v_i', w_i, w_i'\}$.  Since the involutions commute in $D_2$, each involution fixes the fixed point set of the others (setwise); also, by Fact \ref{L:automorphisms}, they fix $V$ and $W$ setwise.  So $\f_j(v_i)$ is either $v_i$ or $v_i'$.  Note that $\f_j$ cannot fix both $v_i$ and $w_i$, since distinct involutions cannot fix the same edge (or they would correspond to the same rotation in $\widehat{G}$).

Without loss of generality, assume $\f_2$ fixes neither $v_1$ nor $w_1$.  So $\f_2(v_1) = v_1'$ and $\f_2(w_1) = w_1'$.  Then the fixed point sets of $\widehat{\f_1}$ and $\widehat{\f_2}$ do not meet in any vertices of the graph.  But, since each involution fixes a subgraph homeomorphic to $S^1$, and edges of the embedding cannot intersect in their interiors, this means the fixed point sets of $\widehat{\f_1}$ and $\widehat{\f_2}$ are disjoint circles.  Also, since the involutions commute, $\widehat{\f_1}$ and $\widehat{\f_2}$ each preserve both of these circles setwise, so the two circles are linked.  Hence $\widehat{\f_3} = \widehat{\f_1}\widehat{\f_2}$ is the composition of two rotations around linked circles which preserves both circles setwise, and is therefore a glide rotation.  But this implies $\widehat{\f_3}$ has no fixed point set, which is a contradiction.  

So $\f_2$ must fix exactly one of $v_1$ and $w_1$.  Without loss of generality, say $\f_2$ fixes $v_1$ (and hence also $v_1'$).  Then $\f_3(v_1) = \f_2\f_1(v_1) = \f_2(v_1) = v_1$, so $\f_3$ also fixes $v_1$ and $v_1'$.  This completes the proof.
\end{proof}

\medskip
\noindent{\sc Remark:}  In the remainder of the paper we will assume the axes are concurrent at two points of $V$, whenever we can do so without loss of generality.

\begin{lemma} \label{L:n3>2}
Let $G\leq \Aut(K_{n,n})$ which is isomorphic to $A_4$ or $A_5$.  Suppose there is an embedding $\Gamma$ of $K_{n,n}$ in $S^3$ such that $G$ is induced on $\Gamma$ by an isomorphic subgroup $\widehat{G}\leq\so(4)$.   Assume the involutions of $G$ each fix exactly two vertices of $V$ and two vertices of $W$.  Then there are at least two vertices in $V$ (or two in $W$) that are fixed by every element of $G$.  
\end{lemma}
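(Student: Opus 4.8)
The plan is to single out a Klein four-subgroup $K\le G$, use Lemma \ref{L:n2=4} to pin down a pair of vertices $v,v'\in V$ fixed by every involution of $K$, and then argue that the remaining elements of $G$ also fix $v$ and $v'$. The key leverage comes from the hypothesis that each involution fixes \emph{exactly} two vertices of $V$: this makes $\{v,v'\}$ the unique pair of $V$-vertices fixed by any involution of $K$, so I can control where the other group elements send $v$ and $v'$. Throughout I use Lemma \ref{L:f(V)=V}, which guarantees that every element of $G$ preserves $V$ (and $W$) setwise.

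For $G\cong A_4$ this finishes cleanly. Take $K=\{1,\f_1,\f_2,\f_3\}$ to be the normal Klein four-subgroup, and use Lemma \ref{L:n2=4} together with the Remark to assume all three involutions fix $v,v'\in V$. For an order-$3$ element $g$, normality gives $g\f_ig\inv=\f_j\in K$, so from $\f_j(v)=v$ I get $\f_i(g\inv v)=g\inv v$; letting $i$ vary shows $g\inv v$ is a vertex of $V$ fixed by all three involutions, hence $g\inv v\in\{v,v'\}$, and similarly $g\inv v'\in\{v,v'\}$. Thus $g$ permutes the two-element set $\{v,v'\}$, and having order $3$ it fixes both. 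Since $A_4$ is generated by $K$ and its order-$3$ elements, all of $G$ fixes $v$ and $v'$.

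For $G\cong A_5$ I would first run the same reduction. Choose $K$ to be a Sylow $2$-subgroup (again Klein four), obtain $v,v'\in V$ from Lemma \ref{L:n2=4}, and apply the $A_4$ argument \emph{inside} $N_{A_5}(K)\cong A_4$, where $K$ is normal; this shows every element of $N_{A_5}(K)$ fixes $v$ and $v'$. Hence $\mathrm{Stab}_G(v)\supseteq N_{A_5}(K)\cong A_4$, and since $A_4$ is a maximal subgroup of $A_5$, either $\mathrm{Stab}_G(v)=G$ — in which case $v$ and its antipode $v'$ are fixed by all of $G$ and we are done — or $\mathrm{Stab}_G(v)=N_{A_5}(K)$.

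The hard part is ruling out the possibility $\mathrm{Stab}_G(v)=N_{A_5}(K)$. In that case the $G$-orbit of $v$ has size $5$, and the same holds for $v'$, so $V$ contains two orbits of size $5$ on which $G$ acts as the natural degree-$5$ representation of $A_5$. An order-$3$ element fixes exactly two points in each such orbit, hence at least four vertices of $V$; by the Fixed Vertex Property this holds for every order-$3$ element. I would then invoke the Automorphism Theorem: fixing four or more vertices of $V$ excludes case (3) (which allows at most two fixed vertices in each part), forcing case (2), so the fixed vertices of an order-$3$ element lie entirely in $V$, number a multiple of $3$, and leave $W$ with no fixed vertices. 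The contradiction should come from reconciling this rigid, asymmetric order-$3$ data with the symmetric order-$2$ data (two fixed vertices in each of $V$ and $W$) and the cycle constraints of the Automorphism Theorem; equivalently, one must show that $\widehat G\le\so(4)$ cannot act on $S^3$ with an orbit of size $5$, i.e. that its faithful $4$-dimensional action is forced to be reducible with a global fixed point. I expect this balancing of the order-$2$ and order-$3$ fixed-vertex counts against the admissible cycle structures to be the technical heart of the argument.
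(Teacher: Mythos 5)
Your $A_4$ argument is sound and is essentially the paper's proof (the paper gets the common pair $\{v,v'\}$ from Lemma \ref{L:n2=4} and then conjugates an involution by an odd-order element, exactly as you do, using that every non-involution in $A_4$ or $A_5$ has odd order). But for $A_5$ your proof is genuinely incomplete, and you say so yourself: the case $\mathrm{Stab}_G(v)=N_{A_5}(K)$, where $v$ and $v'$ have orbits of size $5$, is never actually ruled out. Worse, the specific escape route you propose is a dead end: it is \emph{false} that a subgroup of $\so(4)$ isomorphic to $A_5$ cannot act on $S^3$ with an orbit of size $5$. By Fact \ref{F:A4A5structure} there are exactly two actions, and the symmetry group of the $1$-skeleton of the regular $4$-simplex has precisely such an orbit -- the five vertices of the simplex, each with stabilizer $A_4$. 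In that action the order-$5$ elements are fixed-point-free glide rotations, so there is no global fixed point waiting to be found; the only way to save the lemma is to show the \emph{hypothesis} (each involution fixing exactly two vertices of each of $V$ and $W$) is incompatible with the simplex action. Your other suggested route, ``balancing'' fixed-vertex counts against the Automorphism Theorem, also cannot close the case by itself: the data $n_2^v=n_2^w=2$, $n_3^v=6$, $n_3^w=0$, $n_5^v=n_5^w=0$ passes Burnside's Lemma on both $V$ and $W$ (each forces $n\equiv 30 \pmod{60}$, consistently), which is exactly why the paper needs the separate geometric argument of Lemma \ref{L:n>30} to dispose of the simplex action when $n=30$.

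What is actually needed in your bad case is the embedded-graph geometry. First, the dodecahedral action is excluded: there all axes pass through the two global fixed points $d,d'$, and since the three axes of the involutions in $K$ are distinct circles meeting in at most two points, $\{v,v'\}=\{d,d'\}$, contradicting $\mathrm{Stab}_G(v)\neq G$. So $\widehat G$ is the simplex group. Under the hypothesis, each involution fixes exactly four vertices, and (being of finite order and fixing both endpoints of each edge joining them) it fixes the four edges between them pointwise, so its axis is an embedded $K_{2,2}$ circle, i.e.\ a subgraph. Axes of non-commuting involutions such as $(12)(34)$ and $(12)(35)$ intersect in two points; since both circles are subgraphs, those intersection points must be vertices. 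But they are fixed by the order-$3$ element $(345)$, hence lie in $V$ by your own case-(2) deduction ($n_3^w=0$), while $(12)(34)$ already fixes its full quota of two $V$-vertices, namely the pair $P_5$ belonging to its Klein four-subgroup, and the intersection point would have to lie in two disjoint such pairs -- a contradiction. Note, finally, that the paper's own proof quietly asserts via Lemma \ref{L:n2=4} that \emph{all} involutions of $A_5$ share one fixed pair, even though that lemma applies directly only to a single Klein four-subgroup; you were right to flag that this needs justification, but your completion, as it stands, has a genuine hole precisely where the justification is required.
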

\begin{proof}
By Lemma \ref{L:n2=4}, there are two vertices fixed by all the involutions.  Without loss of generality, say the vertices are $v, v' \in V$.  Let $g$ be an element of odd order, and $\f$ be an involution; then $g\inv\f g$ is also an involution.  Hence $g\inv\f g(v) = v$, so $\f(g(v)) = g(v)$.  By Lemma \ref{L:f(V)=V}, $g(v) \in V$, so $g(v) \in \{v,v'\}$.  Since $g$ has odd order, this means $g$ fixes $v$ and $v'$ pointwise.  Hence $v$ and $v'$ are fixed by every element of $G$.
\end{proof}

\medskip
Burnside's Lemma \cite{bu} is a powerful tool for determining the possible numbers of fixed points of an automorphism of $K_{n,n}$.  Suppose that $G$ is a group acting faithfully on a graph $\gamma$, and let $U$ be a set of vertices preserved (setwise) by $G$.  Let  $|\fix(g|U)|$ denote the number of vertices of $U$ which are fixed by $g\in G$.  Burnside's Lemma gives us the following equation:

$${\rm \#\  vertex\  orbits} = \frac{1}{|G|}\sum_{g \in G}{|\mathrm{fix}(g|U)|}$$

The power of this equation lies in knowing that the left hand side must be an integer.  Say that $G = \TSG(\Gamma)$ is isomorphic to $A_4$ or $A_5$.  The Fixed Vertex Property and Lemma \ref{L:f(V)=V} imply that all elements of $G$ of the same order fix the same number of elements of $V$ and $W$.  We will let $n_i^v$ and $n_i^w$ denote the number of vertices fixed in each set by the elements of order $i$ in $A_4$, $S_4$ and $A_5$, with the exception that in $S_4$ $n_2^v$ and $n_2^w$ denote the number of vertices fixed by the involutions in $A_4 \leq S_4$, and we let $m_2^v$ and $m_2^w$ denote the number of vertices fixed by the involutions in $S_4 - A_4$.

We will first apply Burnside's Lemma to the action of a group isomorphic to $A_4$ on the vertex set $V$ of the graph $K_{n,n}$.  In this case, the equation becomes:
$${\rm \#\  vertex\  orbits} = \frac{1}{12}(n + 3n_2^v + 8n_3^v) \in \Z$$
\noindent The first term on the right comes from the identity, which fixes all $n$ vertices of $V$.

\begin{lemma} \label{L:n2=4l}
Let $G\leq \Aut(K_{n,n})$ which is isomorphic to $A_4$.  Suppose there is an embedding $\Gamma$ of $K_{n,n}$ in $S^3$ such that $G$ is induced on $\Gamma$ by an isomorphic subgroup $\widehat{G}\leq\so(4)$.   If $n_2^w = 0$, then $4 \vert n_2^v$.  
\end{lemma}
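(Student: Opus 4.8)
The plan is to prove both divisibility statements at once by applying Burnside's Lemma to the Klein four-subgroup $D_2 = \{1, \f_1, \f_2, \f_3\} \leq G$ consisting of the identity together with the three involutions of $A_4$, acting separately on the two vertex sets $V$ and $W$. By Lemma \ref{L:f(V)=V} every element of $G$---and in particular every element of $D_2$---preserves $V$ and $W$ setwise, so each of these two actions is well defined; and by the Fixed Vertex Property all three involutions fix exactly $n_2^v$ vertices of $V$ and exactly $n_2^w$ vertices of $W$. The strategy is that restricting from $A_4$ to its order-$4$ subgroup is precisely what isolates the prime $2$ and lets the hypothesis $n_2^w = 0$ do its work.

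First I would run Burnside's Lemma on the action of $D_2$ on $W$. The identity fixes all $n$ vertices of $W$, while each of the three involutions fixes $n_2^w = 0$ of them, so
$$\#\{\text{orbits of } D_2 \text{ on } W\} = \frac{1}{4}\left(n + 3 n_2^w\right) = \frac{n}{4}.$$
Since the left-hand side is a nonnegative integer, this forces $4 \mid n$.

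Next I would apply Burnside's Lemma to the action of $D_2$ on $V$. Now the identity fixes all $n$ vertices and each involution fixes $n_2^v$, giving
$$\#\{\text{orbits of } D_2 \text{ on } V\} = \frac{1}{4}\left(n + 3 n_2^v\right) \in \Z,$$
so $n + 3 n_2^v \equiv 0 \pmod 4$. Combining this with $4 \mid n$ from the previous step yields $3 n_2^v \equiv 0 \pmod 4$, and since $3$ is invertible modulo $4$ (indeed $3 \cdot 3 \equiv 1$) we conclude $4 \mid n_2^v$, as desired.

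There is essentially no deep obstacle here: the argument is two bookkeeping applications of Burnside's Lemma, and the only inputs needing justification are that $D_2$ is genuinely a subgroup preserving each of $V$ and $W$ (Lemma \ref{L:f(V)=V}) and that all three involutions contribute the same fixed-vertex counts $n_2^v$ and $n_2^w$ (the Fixed Vertex Property), both already available. The one point to be careful about is to apply the orbit-counting formula to the order-$4$ subgroup $D_2$ rather than to all of $A_4$; using the full group would mix in the order-$3$ contributions and obscure the clean factor of $4$ that the subgroup computation produces.
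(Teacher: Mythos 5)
Your proof is correct, and it takes a genuinely different---and more elementary---route than the paper's. The paper's proof first invokes the Automorphism Theorem to get $n_2^v = 2m$ when $n_2^w = 0$, then applies Burnside's Lemma to the full order-$12$ group $G \cong A_4$ acting on $V$ and on $V \cup W$, and finally uses the Automorphism Theorem a second time (the trichotomy for order-$3$ fixed vertices, which guarantees $3 \mid 2n_3^v - n_3$) in a mod-$12$ computation that forces $m$ to be even. By restricting to the Klein four-subgroup $D_2$ you bypass all of this: Burnside on $W$ together with $n_2^w = 0$ gives $4 \mid n$, Burnside on $V$ gives $4 \mid (n + 3n_2^v)$, and inverting $3$ modulo $4$ finishes. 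Your two auxiliary inputs are legitimately available: $D_2$ preserves $V$ and $W$ setwise by Lemma \ref{L:f(V)=V}, and the per-set counts $n_2^v$, $n_2^w$ are well defined because the three involutions are conjugate in $A_4$ and conjugating elements preserve $V$ and $W$ (exactly the observation the paper makes when it introduces the notation $n_i^v$, $n_i^w$). Two things are worth noting about what each approach buys. Your argument never uses the hypothesis that $G$ is induced by a subgroup of $\so(4)$, nor the Automorphism Theorem: it is a purely combinatorial statement about faithful $A_4$-actions on the abstract graph, so it proves a slightly stronger lemma and also yields $4 \mid n$ as a free byproduct (consistent with the table in Theorem \ref{T:A4necessity}, where every row with $n_2^w = 0$ has $n \equiv 0 \pmod 4$). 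The paper's route, while longer, recycles machinery already in play---the same full-group Burnside count on $V$ is what generates the table in Theorem \ref{T:A4necessity}---so the two proofs reflect a trade-off between economy of means and economy of exposition; your observation that passing to the Sylow $2$-subgroup isolates the prime $2$ is exactly the right structural reason the cleaner argument exists.
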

\begin{proof}
By the Automorphism Theorem, if $n_2^w = 0$, then $n_2^v = 2m$ for some integer $m$.  Then the total number of vertices fixed by an involution is $n_2 = n_2^v + n_2^w = 2m$.  Similarly, we let $n_3 = n_3^v+n_3^w$ denote the total number of vertices fixed by an element of order 3.  Applying Burnside's Lemma to both the action of $G$ on $V$ and the action of $G$ on $V \cup W$, we find that $\frac{1}{12}(n + 3(2m) + 8n_3^v) \in \Z$ and $\frac{1}{12}(2n + 3(2m) + 8n_3) \in \Z$.  Hence 12 must divide $2(n + 6m + 8n_3^v) - (2n + 6m + 8n_3) = 6m + 8(2n_3^v - n_3)$.  But, by the Automorphism Theorem, either $2n_3^v = n_3$ or $n_3^v = n_3 = 3k$.  In either case, 3 divides $2n_3^v - n_3$, and hence 12 divides $8(2n_3^v - n_3)$.  Therefore, 12 divides $6m$, so $m$ must be even.  So $n_2^v = 4l$ for some integer $l$.
\end{proof}

\begin{thm} \label{T:A4necessity}
If a complete bipartite graph $K_{n,n}$ has an embedding $\Gamma$ in $S^3$ such that $A_4 \leq \TSG(\Gamma)$, then $n \equiv 0, 2, 4, 6, 8 \pmod{12}$.
\end{thm}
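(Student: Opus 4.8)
The plan is to read the congruence condition on $n$ directly off the Burnside equation for the action of $G\cong A_4$ on $V$, namely $\frac{1}{12}(n + 3n_2^v + 8n_3^v) \in \Z$, and to analyze it separately modulo $4$ and modulo $3$. By the Isometry Theorem we may assume $G$ is induced by an isomorphic subgroup $\widehat{G}\leq\so(4)$, so that Lemmas \ref{L:n2}, \ref{L:n2=4l}, and \ref{L:n3>2} are all available. First I would reduce the equation modulo $4$: since $3\equiv -1$ and $8\equiv 0$, it collapses to $n \equiv n_2^v \pmod 4$, so everything hinges on the possible values of $n_2^v$.

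Next I would enumerate those values using the Automorphism Theorem together with the lemmas. An involution fixes $V$ setwise by Lemma \ref{L:f(V)=V}, and its fixed vertices fall into one of three patterns: none in $V$; some in $V$ but none in $W$ (Automorphism Theorem case (2)), where Lemma \ref{L:n2=4l} forces $4\mid n_2^v$; or fixed vertices in both $V$ and $W$ (case (3)), where Lemma \ref{L:n2} rules out exactly one in each set and leaves $n_2^v = n_2^w = 2$. In every case $n_2^v$ is even, so $n$ is even; moreover $n_2^v \equiv 2 \pmod 4$ occurs only in the last pattern, where $n_2^v = n_2^w = 2$. If $n \equiv 0 \pmod 4$ the argument is finished, since then $n \equiv 0, 4, 8 \pmod{12}$, all of which are permitted.

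The remaining case $n \equiv 2 \pmod 4$ is where the real work lies, and is the step I expect to be the main obstacle, since $n \equiv 10 \pmod{12}$ is the unique even residue that must be excluded. Here $n_2^v = n_2^w = 2$, so Lemma \ref{L:n3>2} applies and produces two vertices fixed by every element of $G$; interchanging $V$ and $W$ if necessary (which does not change $n$), I may assume these lie in $V$. These two vertices are fixed by each order-$3$ element, so $n_3^v \geq 2$, and the Automorphism Theorem restricts $n_3^v$ to be either $2$ or a multiple of $3$ --- in particular $n_3^v \not\equiv 1 \pmod 3$. Reducing the Burnside equation modulo $3$ (using $8 \equiv 2 \equiv -1$) gives $n \equiv n_3^v \pmod 3$, so $n \not\equiv 1 \pmod 3$, which excludes $n \equiv 10 \pmod{12}$ and leaves $n \equiv 2, 6 \pmod{12}$.

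Combining the two cases yields $n \equiv 0, 2, 4, 6, 8 \pmod{12}$, as required. The delicate points to get right are (i) the exhaustiveness of the three fixed-vertex patterns for an involution, together with the claim that only the last produces $n_2^v \equiv 2 \pmod 4$, and (ii) the legitimacy of the $V/W$ interchange in the final case, so that the mod-$3$ count may be run on whichever vertex set carries the globally fixed pair.
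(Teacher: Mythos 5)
Your proposal is correct and uses essentially the same argument as the paper: the Isometry Theorem, Burnside's Lemma applied to $V$, the Automorphism Theorem's restrictions on $n_2^v$ and $n_3^v$, and Lemmas \ref{L:n2}, \ref{L:n2=4l} and \ref{L:n3>2}, with the paper simply tabulating the five admissible pairs $(n_2^v, n_3^v)$ and reading off $n \pmod{12}$, which your mod-$4$/mod-$3$ splitting reproduces exactly. Both delicate points you flag are sound: the value $n_3^v = 0$ is absorbed into the $3k$ case, and since in the relevant case $n_2^v = n_2^w = 2$ and Burnside's count applies equally to $W$ (which is $G$-invariant by Lemma \ref{L:f(V)=V}), the $V$/$W$ interchange is harmless.
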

\begin{proof}
Let $G \leq \TSG(\Gamma)$ such that $G \cong A_4$.  By the Isometry Theorem, there is a re-embedding $\Delta$ of $K_{n,n}$ such that $G$ is induced on $\Delta$ by an isomorphic subgroup $\widehat{G} \leq \so(4)$.  Applying Burnside's Lemma to the action of $G$ on the set $V$ (which is fixed by Lemma \ref{L:f(V)=V}), we find that $\frac{1}{12}(n + 3n_2^v + 8n_3^v) \in \Z$.  So, given values of $n_2^v$ and $n_3^v$, we can determine $n$, modulo 12.

By the Automorphism Theorem, either $n_2^v = n_2^w = 1$ or $2$, or $n_2^v = 2m$ and $n_2^w = 0$.  Similarly, either $n_3^v = n_3^w = 1$ or $2$, or $n_3^v = 3k$ and $n_3^w = 0$.  By Lemma \ref{L:n2} we know that $n_2^v \neq 1$, and by Lemma \ref{L:n3>2} we know that if $n_2^v = n_2^w = 2$, then $n_3^v \neq 1$.  By Lemma \ref{L:n2=4l}, we know if $n_2^w = 0$ then $n_2^v = 4l$ for some integer $l$.

The possible cases are summarized in the table below, along with the resulting value of $n$ (in the first two rows, $n_2^w = 2$; in the last three $n_2^w = 0$).

\begin{center}
\begin{tabular}{|c|c|c|}
	\hline
	$n_2^v$ & $n_3^v$ & $n\pmod{12}$ \\ \hline
	2 & 2 & 2 \\ \hline
	2 & $3k$ & 6 \\ \hline
	$4l$ & 1 & 4 \\ \hline
	$4l$ & 2 & 8 \\ \hline
	$4l$ & $3k$ & 0 \\ \hline
\end{tabular}
\end{center}

We conclude that $n \equiv 0, 2, 4, 6, 8 \pmod{12}$.
\end{proof}

\bigskip

The restrictions in Theorem \ref{T:A4necessity} hold whenever the topological symmetry group contains a subgroup isomorphic to $A_4$, $S_4$ or $A_5$.  However, if it contains a subgroup isomorphic to $S_4$ or $A_5$, we can find additional restrictions.  We first look at $S_4$.

\begin{lemma} \label{L:TwoFixed}
Let $G\leq \Aut(K_{n,n})$ which is isomorphic to $S_4$, and let $H \leq G$ be the subgroup isomorphic to $A_4$.  Suppose there is an embedding $\Gamma$ of $K_{n,n}$ in $S^3$ such that $G$ is induced on $\Gamma$ by an isomorphic subgroup $\widehat{G}\leq\so(4)$.   Assume the involutions of $H$ each fix exactly two vertices of $V$ and two vertices of $W$.  Then the action of $G$ fixes $V$ and $W$ setwise.
\end{lemma}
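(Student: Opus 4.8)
The plan is to show that no element of $G$ can interchange the vertex sets $V$ and $W$; by Fact~\ref{L:automorphisms} this is exactly what it means for $G$ to fix $V$ and $W$ setwise. By Lemma~\ref{L:f(V)=V}, every element of the subgroup $H \cong A_4$ already fixes $V$ setwise, so the whole content of the lemma is to rule out the possibility that some element of $G - H$ (a transposition or a $4$-cycle of $S_4$) sends $V$ to $W$. The mechanism I would use is to locate a small, canonical set of vertices that $G$ must preserve and that lives entirely inside $V$.

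First I would pin down the set $F$ of vertices fixed by \emph{every} element of $H$. Since the involutions of $H$ fix exactly two vertices of $V$ and two of $W$ by hypothesis, Lemma~\ref{L:n2=4} (applied to the Klein four subgroup of $H$) and Lemma~\ref{L:n3>2} apply; using the Remark I may assume the two commonly fixed vertices $v, v'$ lie in $V$, and because the involutions fix only two vertices of $V$ we get $F \cap V = \{v, v'\}$. The key remaining claim is that $F \cap W = \emptyset$. To see this, recall from Lemma~\ref{L:n2=4} that the axes of the three involutions $\f_1, \f_2, \f_3$ of $H$ are concurrent at $v$ and $v'$. These axes are distinct geodesic circles, and by the Isometry Theorem two distinct rotation axes in $\so(4)$ meet in at most two points; since each pair already shares $v$ and $v'$, they meet \emph{only} there. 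Hence the two $W$-vertices fixed by $\f_i$ lie on the axis of $\widehat{\f_i}$ but on no other axis, so the three pairs of fixed $W$-vertices are pairwise disjoint and no vertex of $W$ is fixed by all of $H$. Therefore $F = \{v, v'\} \subseteq V$.

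Finally, since $A_4$ is the unique index-$2$ (hence normal) subgroup of $S_4$, we have $H \triangleleft G$, and this forces $F$ to be preserved setwise by all of $G$: for $g \in G$, $x \in F$, and any $h \in H$ we have $h(g(x)) = g\bigl((g\inv h g)(x)\bigr) = g(x)$, since $g\inv h g \in H$ fixes $x$; thus $g(x) \in F$. Now if some $g \in G$ interchanged $V$ and $W$, it would carry $F \cap V$ bijectively onto $F \cap W$, forcing $|F \cap V| = |F \cap W|$, i.e. $2 = 0$, a contradiction. Hence every element of $G$ fixes $V$ and $W$ setwise.

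The step I expect to be the main obstacle is establishing $F \cap W = \emptyset$: this is the only genuinely geometric point, resting on the fact that distinct rotation axes in $\so(4)$ meet in at most two points, which I would need to invoke carefully to conclude the three involutions' $W$-fixed pairs are disjoint. Once $F$ is shown to be a two-element subset of $V$, the rest is routine group-theoretic bookkeeping with the normality of $H$ in $G$.
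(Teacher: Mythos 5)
Your proposal is correct and takes essentially the same approach as the paper: Lemma~\ref{L:n2=4} supplies the common pair $v, v'$, normality of $H \cong A_4$ in $G$ makes their common fixed set $G$-invariant, and since that set lies in $V$ no element of $G$ can swap $V$ and $W$ (Fact~\ref{L:automorphisms}). The paper packages this more directly---for $g \in G$ and each involution $\phi \in H$, $g\inv\phi g$ fixes $v$, so $g(v)$ is fixed by all three involutions and hence $g(v) \in \{v,v'\} \subseteq V$---whereas you spell out the implicit geometric sub-step (distinct axes meet only at $v, v'$, so no $W$-vertex is fixed by all of $H$) and finish by counting $|F \cap V| = |F \cap W|$; this is a presentational difference, not a different proof.
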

\begin{proof}
By Lemma \ref{L:n2=4}, the axes of rotation for all three involutions in $\widehat{H}$ meet in two points $v, v'$.  Let $g \in G$, and $\phi$ be an involution in $H$.  Then $g\inv\phi g$ is also an involution in $H$, so $g\inv \phi g(v) = v$.  Hence $\phi(g(v)) = g(v)$, so $g(v)$ is one of the four vertices fixed by $\phi$.  Since this is true for any involution in $H$, $g(v)$ must be fixed by all three involutions, and hence $g(v) \in \{v,v'\}$.  This means $g$ does not interchange $V$ and $W$, so by Fact \ref{L:automorphisms} it fixes $V$ and $W$ setwise.
\end{proof}

\begin{lemma} \label{L:n=6}
Let $G\leq \Aut(K_{n,n})$ which is isomorphic to $S_4$, and let $H \leq G$ be the subgroup isomorphic to $A_4$.  Then there does not exist an embedding $\Gamma$ of $K_{6,6}$ in $S^3$ such that $G$ is induced on $\Gamma$ by an isomorphic subgroup $\widehat{G}\leq\so(4)$.
\end{lemma}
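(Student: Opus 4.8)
The plan is to determine the action of $G\cong S_4$ on the vertices combinatorially, split according to whether the odd permutations fix or interchange a distinguished pair of vertices, and rule out each alternative—the first by an orbit count inside $\so(3)$, the second by exhibiting an unavoidable crossing of two edges.

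First I would record the fixed-point data forced by $n=6$. Rerunning the analysis behind Theorem \ref{T:A4necessity} with $n=6$ leaves only $n_2^v=n_2^w=2$ (and $n_3^v=3$, $n_3^w=0$), so the involutions of $H\cong A_4$ fix exactly two vertices of each of $V,W$. Hence Lemma \ref{L:TwoFixed} applies: $G$ fixes $V$ and $W$ setwise, and by Lemma \ref{L:n3>2} there are two vertices $v,v'\in V$ fixed by all of $H$, with $\{v,v'\}$ invariant under $G$ (as in the proof of Lemma \ref{L:TwoFixed}). Setting $U=V\setminus\{v,v'\}$, the map $G\to\mathrm{Sym}(U)$ has a normal kernel meeting $A_4$ trivially, hence is trivial; so $U$ is a single orbit of size $4$ carrying the standard action of $G$, with point stabilizers $\cong S_3$. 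Moreover the three involutions of $H$ fix disjoint pairs of $W$ (by the concurrency in Lemma \ref{L:n2=4}) which therefore cover $W$, and the order-$3$ elements permute these pairs cyclically, so $W$ is a single $G$-orbit of size $6$ with stabilizers of order $4$.

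The action of $G$ on $\{v,v'\}$ is a homomorphism to $S_2$ with kernel $A_4$ or $G$. If the kernel is all of $G$, then $\widehat G$ fixes $v$ and $v'$, so $\widehat G\subseteq\so(3)=\mathrm{Stab}(v)$. But the stabilizer $\cong S_3$ of a point $a\in U$ (with $a\neq\pm v$, since $a$ lies in a size-$4$ orbit) would then fix a non-polar point of $S^3$ and hence be a finite subgroup of a circle group, forcing it to be cyclic—impossible. So this case cannot occur, and the odd elements of $G$ must interchange $v$ and $v'$.

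In the remaining case I would force a geometric contradiction. Each transposition $\f$ interchanges $v,v'$ and fixes exactly two vertices of $U$, so $m_2^v=2$ and $\widehat\f$ is a rotation with fixed great circle $C_\f$. A Burnside count on $W$, combined with the fact that each $4$-cycle squares to an involution whose invariant circle it must rotate by $\pi$ (hence it is a fixed-point-free glide rotation and $n_4^w=0$), pins down $m_2^w=2$; thus $\widehat\f$ fixes exactly two vertices of $W$, and the four edges joining its two fixed $V$-vertices to its two fixed $W$-vertices are fixed by $\widehat\f$, lie on $C_\f$, and cover it. Now take two transpositions sharing a fixed vertex $d\in U$, say $(12)$ and $(13)$, which both fix the index $4$. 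Their fixed circles are distinct great circles through $d$, hence both pass through $-d$ and meet precisely in $\{d,-d\}$. Since $-d$ is fixed by $\mathrm{Stab}(d)\cong S_3$ while no graph vertex other than $d$ is $S_3$-fixed (none in $U$, none in $\{v,v'\}$ because $S_3$ contains odd elements, none in $W$ because a stabilizer there has order $4$), the point $-d$ is not a vertex; yet it lies in the interior of an edge on each circle, so these two edges cross at $-d$, contradicting that $\Gamma$ is an embedding. The main obstacle is precisely this last step: one must argue that an order-two rotation fixing an edge and both endpoints fixes the edge pointwise (so the fixed edges genuinely lie on the circles and cover them), and that the forced intersection point $-d$ is a non-vertex, producing an honest transverse crossing. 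The intermediate computation $n_4^w=0$, hence $m_2^w=2$, is what guarantees the circles carry enough edges for the crossing to be unavoidable, and it is here that the smallness of $n=6$ is essential—for larger $n\equiv 6\pmod{12}$ there is room to place a genuine vertex at $-d$ and resolve the crossing.
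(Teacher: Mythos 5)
Your proposal is correct, but it takes a genuinely different route from the paper's proof of Lemma \ref{L:n=6}. The paper, after pinning down $n_2^v=n_2^w=2$, $n_3^v=3$, $n_3^w=0$ and invoking Lemma \ref{L:TwoFixed}, splits into cases according to the Automorphism Theorem ($n_4^v=n_4^w=2$ versus $n_4^v=n_4^w=0$): the first case is killed by a Burnside count forcing $m_2^v=4$ and then a pigeonhole argument (six odd involutions, only four non-central vertices of $V$, so two would share an axis); the second by a Burnside count forcing $m_2^v=2$ and a delicate analysis of how the axis of an odd involution meets the order-$3$ axes, yielding four vertices of $V$ on one order-$3$ axis. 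You instead determine the full permutation action up front ($U=V\setminus\{v,v'\}$ carries the standard $S_4$-action, $W$ is a single orbit with Klein four-group stabilizers) and split on whether the odd elements fix or swap $\{v,v'\}$ --- a dichotomy that matches the paper's $n_4=2$/$n_4=0$ cases but is handled by different mechanisms. Your disposal of the first case (the pointwise stabilizer of two non-antipodal points of $S^3$ in $\so(4)$ is a circle group, so it cannot contain the noncyclic stabilizer $S_3$ of a point of $U$) is cleaner than the paper's Burnside-plus-pigeonhole; your disposal of the second case exploits the linearity of $\so(4)$ --- fixed circles are great circles, so two of them through the shared fixed vertex $d$ must also meet at the antipode $-d$, which your orbit data shows is not a vertex, producing a forbidden edge crossing. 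The paper never uses antipodality; your argument buys geometric transparency at the cost of needing the orbit structure in full. Three small points should be shored up: (i) the analysis behind Theorem \ref{T:A4necessity} only gives $n_3^v=3k$, not $n_3^v=3$; either add the paper's axis-intersection argument ruling out $n_3^v=6$, or note that your kernel argument survives without it, since every nontrivial normal subgroup of $S_4$ contains the $H$-involutions, which fix only two vertices of $V$ (and then $n_3^v=3$ follows a posteriori from the standard action); (ii) in showing an order-$4$ element $g$ is a glide rotation, you should exclude the possibility that $g$ restricts to a reflection of the invariant circle $A=\fix(g^2)$: if $g$ had any fixed point, $\fix(g)$ would be a geodesic circle contained in $A$, hence equal to $A$, contradicting $g(v)=v'$; (iii) the step you flag --- that an involution fixing an edge setwise and both its endpoints fixes it pointwise --- is standard (an order-two homeomorphism of an arc fixing both endpoints is increasing, hence the identity), so the fixed $K_{2,2}$ of each transposition genuinely fills out its great circle. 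With those sentences added, your argument is a complete and valid alternative proof.
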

\begin{proof}
Assume, towards contradiction, that there is such an embedding of $K_{6,6}$.  From the proof of Theorem \ref{T:A4necessity}, we know that $n_2^v = n_2^w = 2$, $n_3^v = 3k$ and $n_3^w = 0$.  From Lemma \ref{L:n2=4} and Lemma \ref{L:n3>2}, we know there are vertices $v$ and $v'$ which are fixed by the action of $H$, so $k \geq 1$.  Hence the elements of $\widehat{G}$ of order 3 are rotations whose axes each contain either 3 or 6 vertices of $V$.  Since two such axes can intersect in at most 2 points, and they all contain $v$ and $v'$, none of them can contain all 6 vertices of $V$.  So each axis of order 3 contains 3 vertices of $V$, and $k = 1$.  Moreover, each vertex of $V$ (other than $v$ and $v'$) lies on exactly one axis of order 3.

By Lemma \ref{L:TwoFixed}, the action of $G$ fixes $V$ and $W$ setwise; the Automorphism Theorem then implies that either $n_4^v = n_4^w = 2$, or $n_4^v = n_4^w = 0$ and the elements of order four induce a 2-cycle in each of $V$ and $W$.  We will consider each of these two cases.

\medskip

\noindent{\sc Case 1:}  We first assume $n_4^v = n_4^w = 2$.  If $g \in \widehat{G}$ has order 4, this means $g$ is a rotation.  Moreover, $g^2$ is an involution in $\widehat{H}$, so $v$ and $v'$ lie on the axis of this rotation; hence $g$ also fixes $v$ and $v'$.  Since $G$ is generated by the elements of orders 3 and 4, this implies $v$ and $v'$ are fixed by all of $G$.

Applying Burnside's Lemma to the action of $G$ on $V$ we have:
$${\rm \#\ of\ vertex\ orbits} = \frac{1}{24}(6 + 3n_2^v + 8n_3^v + 6m_2^v + 6n_4^v)$$
$$= \frac{1}{24}(6 + 3(2) + 8(3) + 6m_2^v + 6(2)) = \frac{1}{24}(48 + 6m_2^v) = 2 + \frac{m_2^v}{4} \in \Z$$

Hence $4\vert m_2^v$.  Since we know that $2 \leq m_2^v \leq 6$, this means $m_2^v = 4$.  Let $\psi$ be an involution in $G - H$, so $\psi$ fixes 4 vertices of $V$, including $v$ and $v'$.  Then $\psi$ is a rotation, with 4 vertices of $V$ on its axis of rotation.  There are 6 such involutions, but only 4 vertices of $V$ other than $v$ and $v'$.  So at least two of these involutions must fix at least three vertices of $V$ in common.  This means that two of these involutions have the same axis of rotation, which is a contradiction.

\medskip

\noindent{\sc Case 2:}  Now assume that $n_4^v = n_4^w = 0$.  Applying Burnside's Lemma to the action of $G$ on $V$ we have:
$${\rm \#\ of\ vertex\ orbits} = \frac{1}{24}(6 + 3(2) + 8(3) + 6m_2^v + 6(0))$$
$$= \frac{1}{24}(36 + 6m_2^v) = \frac{3}{2} + \frac{m_2^v}{4} \in \Z$$

Hence $m_2^v \equiv 2 \pmod{4}$, which means $m_2^v = 2$ or $6$.  If $m_2^v = 6$, the axis of an involution in $\widehat{G} - \widehat{H}$ will intersect the axis of each rotation of order 3 in three points, which is impossible.  So $m_2^v = 2$; similarly, $m_2^w = 2$.  Let $\psi$ be an involution in $G - H$.  Then there is an element $g \in H$ of order 3 such that $\psi g$ has order 4.  So $\psi g$ does not fix any vertices; since $g$ fixes $v$ and $v'$, this means $\psi$ does {\it not} fix either $v$ or $v'$.  So it must fix two other vertices in $V$.  Each of these vertices lies on an axis of rotation of order 3; since two distinct axes must intersect in either 0 or 2 points, the axis of $\psi$ must intersect the axis of rotation of order 3 in another point.  This point of intersection cannot be in the interior of an edge, or there would be three edges meeting at that point, and we would not have an embedding.  It cannot be at a vertex of $W$, since $n_3^w = 0$.  So both the vertices of $V$ fixed by $\psi$ must lie on the same axis of order 3.  But then this axis contains 4 vertices of $V$:  $v$, $v'$, and the two vertices shared with $\psi$.  This is again a contradiction.

\medskip
Therefore, there is no embedding $\Gamma$ of $K_{6,6}$ in $S^3$ such that $G$ is induced on $\Gamma$ by an isomorphic subgroup $\widehat{G}\leq\so(4)$.
\end{proof}

\bigskip

We now consider the additional restrictions that arise when the topological symmetry group has a subgroup isomorphic to $A_5$.  We first describe the subgroups of $\so(4)$ isomorphic to $A_4$ or $A_5$, from Du Val's classification of the subgroups of $\so(4)$.

\begin{fact} \label{F:A4A5structure} \cite{du}
The subgroups of $\so(4)$ isomorphic to $A_4$ and $A_5$ can be described as follows:
\begin{itemize}
	\item The only subgroup of $\so(4)$ isomorphic to $A_4$ is the group of symmetries of a solid regular tetrahedron (so all motions are rotations).
	\item The only subgroups of $\so(4)$ isomorphic to $A_5$ are the group of symmetries of a solid regular dodecahedron (in which all motions are rotations) and the group of symmetries of the 1-skeleton of a regular 4-simplex (in which the elements of order 5 are glide rotations, and the other elements are rotations).
\end{itemize}
\end{fact}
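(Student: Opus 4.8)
Since this is Du Val's classification, the natural plan is to reconstruct it through the quaternionic double cover $\Phi: S^3 \x S^3 \to \so(4)$, where a pair of unit quaternions $(p,q)$ acts on $\mathbb{H}\cong\R^4$ by $x \mapsto pxq\inv$ and $\ker\Phi = \{\pm(1,1)\}$. Given $G \le \so(4)$ isomorphic to $A_4$ or $A_5$, I would pass to its full preimage $\tilde G = \Phi\inv(G)$, a subgroup of $S^3\x S^3$ of order $24$ or $120$ containing $(-1,-1)$, and study the two coordinate projections $p_1,p_2:\tilde G\to S^3$.

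The first step is to show both projections are injective. The kernel $K_i$ of a projection is normal in $\tilde G$ and misses $(-1,-1)$, so it injects into $G$ with normal image; since $A_5$ is simple and the only proper nontrivial normal subgroup of $A_4$ is $V_4$, the image is trivial, $A_5$, $V_4$, or $A_4$. But a nontrivial image would realize $V_4$, $A_4$, or $A_5$ as a finite subgroup of $S^3$, which is impossible because $S^3$ has the unique involution $-1$ while each of these groups has several. Hence $K_i = 1$, and $\tilde G$ embeds in $S^3$ under each projection. As the only subgroups of $S^3$ of orders $24$ and $120$ are the binary tetrahedral group $2T$ and the binary icosahedral group $2I$ (each unique up to conjugacy), after conjugating I may assume $p_1(\tilde G) = p_2(\tilde G)$ is the standard $2T$ or $2I$, and that $\tilde G = \{(x,\theta(x)) : x\}$ is the graph of an automorphism $\theta \in \Aut(2T)$ or $\Aut(2I)$.

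Next I would reduce $\theta$ modulo the freedom to conjugate in $S^3\x S^3$: conjugating by $(a,b)$ replaces $\theta$ by $x\mapsto b\,\theta(a\inv x a)\,b\inv$, so only the class of $\theta$ in $\mathrm{Out}$ survives, and $\mathrm{Out}(2T)\cong\mathrm{Out}(2I)\cong\Z_2$. The crucial point, and where the two groups diverge, is whether the outer automorphism is induced by conjugation by some element of $S^3$. Conjugation in $S^3$ realizes exactly the automorphisms coming from the normalizer in $\so(3)$ of the image group: the tetrahedral group's normalizer is the octahedral group $S_4 \supsetneq A_4$, so the outer automorphism of $2T$ is realized by an ambient conjugation and the outer graph is conjugate to the diagonal $\{(x,x)\}$; the icosahedral group is self-normalizing (maximal) in $\so(3)$, so the outer automorphism of $2I$ is not realized by any element of $S^3$, giving two genuinely non-conjugate subgroups. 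This explains why $A_4$ yields a single subgroup while $A_5$ yields two.

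Finally I would read off the rotation/glide structure from the observation that $(p,q)$ is a rotation (fixes a geodesic circle) if and only if $p$ and $q$ are conjugate in $S^3$, i.e. $\mathrm{Re}(p)=\mathrm{Re}(q)$. For the diagonal, and for any inner twist, this holds for every element, so the group consists entirely of rotations: the tetrahedral rotation group for $A_4$ and the dodecahedral rotation group for $A_5$. For the outer $A_5$ I would compute real parts of the lifts class by class: in $2I$ the real part is determined by the order of an element except for the order-$5$ elements, whose two $A_5$-conjugacy classes lift to quaternions of two distinct real parts that the outer automorphism interchanges; hence the order-$5$ elements become glide rotations while all others remain rotations, and this group is identified with the symmetry group of the $1$-skeleton of the regular $4$-simplex. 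The analogous computation for $2T$ shows the real part there is entirely determined by order, which is exactly why no automorphism can produce a glide and $A_4$ admits only the all-rotations model. The main obstacle is this last bookkeeping, tracking the splitting of conjugacy classes under the covers $2T\to A_4$ and $2I\to A_5$ and the exact real parts of each class, together with the normalizer computation that distinguishes the two cases.
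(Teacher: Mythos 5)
The paper offers no proof of this Fact at all---it is quoted directly from Du Val's classification of the finite subgroups of $\so(4)$---so your proposal is compared against the literature rather than against an argument in the text. Your reconstruction via the double cover $S^3 \x S^3 \to \so(4)$, $(p,q)\colon x \mapsto pxq\inv$, is exactly the standard (essentially Du Val's own) quaternionic route, and its skeleton is sound: the projection kernels are killed because $S^3$ has the unique involution $-1$ while $V_4 \leq A_4, A_5$; the graph-of-automorphism reduction and the absorption of inner twists are correct; the normalizer dichotomy is the right mechanism (the tetrahedral group has normalizer the octahedral group in $\so(3)$, so the outer automorphism of $2T$ is ambiently realized and every $A_4$ collapses to the diagonal, whereas the icosahedral group is self-normalizing, so the outer graph of $2I$ gives a genuinely second class); and the fixed-point criterion---$x \mapsto pxq\inv$ has fixed points if and only if $p$ and $q$ are conjugate, i.e.\ have equal real parts---is correct, as is the class bookkeeping: the outer automorphism of $2I$ swaps the two order-$5$ classes and the two order-$10$ classes, whose real parts are exchanged by $\sqrt{5} \mapsto -\sqrt{5}$, so precisely the elements over order-$5$ elements of $A_5$ become glide rotations, and this identifies the outer group with the $4$-simplex group, whose $5$-cycles act on the sum-zero hyperplane of $\R^5$ with eigenvalues the primitive fifth roots of unity, hence without fixed points.

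One intermediate claim is false as stated and needs a patch: $2T$ and $2I$ are \emph{not} the only subgroups of $S^3$ of orders $24$ and $120$; the cyclic groups $\Z_{24}$, $\Z_{120}$ and the dicyclic (binary dihedral) groups of those orders also embed in $S^3$. The repair uses only facts you already invoke. The image $p_i(\tilde G)$ contains $-1$ and its quotient by $\{\pm 1\}$ is $A_4$ (resp.\ $A_5$): cyclic groups are abelian; a dicyclic group has a cyclic subgroup of index $2$, hence so does each of its quotients, while $A_4$ and $A_5$ have no such subgroup; and the split extensions $A_4 \x \Z_2$ and $A_5 \x \Z_2$ contain $V_4$, hence more than one involution, so they cannot embed in $S^3$---the same observation that killed the projection kernels. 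This forces $p_i(\tilde G) \cong 2T$ or $2I$, unique up to conjugacy as the preimages of the tetrahedral and icosahedral subgroups of $\so(3)$, which are themselves unique up to conjugacy. With that one line repaired, your argument is a complete and correct proof of the Fact.
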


\medskip
In particular, in any group of motions isomorphic to $A_4$ or $A_5$, the elements of orders 2 and 3 are always rotations.  We will also use the following observation about the sizes of the orbits of points of $S^3$ under the action of the group of symmetries of a solid regular dodecahedron.

\begin{fact} \label{F:A5orbits}
If a group $G \cong A_5$ acts on $S^3$ as the group of symmetries of a solid regular dodecahedron, then every point $p$ in $S^3$ has an orbit of size 60 (if $p$ is not fixed by any element of $G$), 30 (if $p$ is fixed by an element of order 2), 20 (if $p$ is fixed by an element of order 3), 12 (if $p$ is fixed by an element of order 5) or 1 (if $p$ is fixed by rotations with distinct axes, in which case it is at the center of the dodecahedron or the center of its complement).
\end{fact}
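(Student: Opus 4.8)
The final statement is Fact~\ref{F:A5orbits}, which enumerates the possible orbit sizes for the action of $A_5$ on $S^3$ as the symmetry group of a solid regular dodecahedron.

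The plan is to apply the orbit--stabilizer theorem: since $|G| = 60$, the orbit of a point $p \in S^3$ has size $60/|\mathrm{Stab}(p)|$, so it suffices to determine which subgroups of $G$ occur as point stabilizers and to show that the possible stabilizer orders are exactly $1, 2, 3, 5, 60$. By Fact \ref{F:A4A5structure}, every non-identity element of $\widehat{G}$ is a rotation whose fixed point set is a geodesic circle in $S^3$ (its \emph{axis}), so $\mathrm{Stab}(p)$ consists precisely of those elements whose axis passes through $p$. The problem thus reduces to understanding how the axes can pass through a common point.

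First I would identify the two points fixed by all of $G$, namely the center $c$ of the solid dodecahedron and the center $c'$ of its complement, and observe that the axis of every non-identity element passes through both $c$ and $c'$. Concretely, realizing $\widehat{G}$ as $A_5 \subset \so(3) \subset \so(4)$ acting on $S^3 \subset \R^4$ via $A \mapsto \mathrm{diag}(A,1)$, the two poles $(0,0,0,\pm 1)$ are fixed by the entire group, and the axis of each element is a great circle through both poles. The key step is then to show that any point $p \neq c, c'$ lies on \emph{at most one} axis: if $p$ lay on two distinct axes, those two geodesic circles would share the three points $c$, $c'$, and $p$, contradicting the observation following the Isometry Theorem that the fixed point sets of two distinct rotations meet in at most two points.

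Consequently, for $p \neq c, c'$ the stabilizer $\mathrm{Stab}(p)$ is the group of all rotations about a single axis, hence a cyclic subgroup of $A_5$ of order $1$ (if $p$ lies on no axis), or $2$, $3$, $5$ according as $p$ lies on the axis of an involution, an order $3$ element, or an order $5$ element; the orbit--stabilizer theorem then yields orbit sizes $60$, $30$, $20$, $12$ respectively. For $p \in \{c, c'\}$ the stabilizer is all of $G$ and the orbit has size $1$, and these are exactly the points fixed by rotations with distinct axes. As a consistency check, the $15$ involutions, $20$ order $3$ elements, and $24$ order $5$ elements lie on $15$, $10$, and $6$ axes, which meet a latitude $2$-sphere in the $30$, $20$, and $12$ points forming the orbits of those sizes. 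I expect the only real obstacle to be the geometric input that all axes are concurrent at the two centers $c, c'$; once that is established, the remainder is routine orbit counting.
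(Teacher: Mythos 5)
Your proof is correct: the orbit--stabilizer computation, combined with the realization of $G$ as $\mathrm{diag}(A,1)$ with $A \in A_5 \subset \so(3)$ (so that every axis is a great circle through the two fixed points $c, c'$) and the fact that two distinct geodesic circles meet in at most two points, correctly pins down the point stabilizers as the cyclic subgroups of orders $1, 2, 3, 5$ or all of $G$. The paper states this fact as an unproved observation, so there is no argument to compare against; yours supplies precisely the justification the paper leaves implicit, including the needed detail that in $A_5$ the full pointwise stabilizer of a single axis is cyclic of order exactly $2$, $3$, or $5$.
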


\medskip
We apply these facts to obtain restrictions on when $\TSG(\Gamma) \cong A_5$ for an embedding $\Gamma$ of a complete bipartite graph.

\begin{lemma} \label{L:nvneq1}
Let $G\leq \Aut(K_{n,n})$ which is isomorphic to $A_5$.  Suppose there is an embedding $\Gamma$ of $K_{n,n}$ in $S^3$ such that $G$ is induced on $\Gamma$ by an isomorphic subgroup $\widehat{G}\leq\so(4)$.  Then an element of $G$ cannot fix exactly one vertex of $V$.
\end{lemma}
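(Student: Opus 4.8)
The plan is to adapt the strategy of Lemma~\ref{L:n2}, but the commuting-involution trick used there does not extend directly, since elements of order $3$ or $5$ in $A_5$ do not commute with enough conjugates. First I would dispose of the easy geometric input: if some $g \in G$ of order $r \in \{2,3,5\}$ fixes a vertex of $V$, then $\widehat{g}$ has a fixed point in $S^3$, so by Fact~\ref{F:A4A5structure} it is a rotation rather than a glide rotation, and its fixed-point set is a geodesic circle $C_g$. Assuming $g$ fixes \emph{exactly} one vertex $v_0 \in V$, the Automorphism Theorem (case (3), using Lemma~\ref{L:f(V)=V} so that $V$ and $W$ are not interchanged) forces $g$ to fix exactly one vertex $w_0 \in W$ as well; hence $v_0, w_0 \in C_g$, and the edge $e = \overline{v_0 w_0}$ is the unique edge joining them.

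The key step is to pass to the normalizer $N = N_G(\orb{g})$, which is $V_4$, $S_3$, or $D_5$ according as $r = 2, 3, 5$. A short conjugation argument shows $N$ fixes $v_0$ (and likewise $w_0$): for $h \in N$ we have $hgh\inv = g^k$, which fixes the same vertices as $g$ by the Fixed Vertex Property, and one checks that $h\inv(v_0)$ is then fixed by $g$, forcing $h(v_0) = v_0$ since $v_0$ is the only vertex of $V$ fixed by $g$. Next I would choose an involution $\tau \in N$ (one exists in each of the three groups). Since $\tau$ is a rotation fixing $v_0$ and $w_0$, its axis $C_\tau$ also contains $v_0, w_0$; and $C_\tau \neq C_g$, because otherwise $\tau$ and $g$ would be commuting rotations about a common circle, contradicting that $\tau$ and $g$ generate a non-abelian group (or, when $r = 2$, that distinct involutions have distinct axes). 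Two distinct geodesic circles meet in a pair of antipodal points, so $v_0$ and $w_0$ must be antipodal in $S^3$.

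The main obstacle is that antipodality does not by itself produce a contradiction: $N$ can act on the equatorial $2$-sphere $\Sigma = \{x \in S^3 : x \cdot v_0 = 0\}$ as a perfectly legitimate dihedral subgroup of $\so(3)$. To finish, I would exploit the embedded edge $e$. Each $h \in N$ fixes the two distinct endpoints $v_0, w_0$ of $e$ and maps $e$ to itself (it is the unique edge between them), so $h$ restricts to an \emph{order-preserving} self-homeomorphism of the arc $e$. Since $e$ runs from $v_0$ to its antipode $w_0$, it must cross $\Sigma$; let $x_0$ be the first crossing point along $e$. Because every $h \in N$ preserves both $e$ and $\Sigma$ and respects the order along $e$, it must send the first crossing to the first crossing, so $h(x_0) = x_0$. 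Thus $x_0 \in \Sigma$ is fixed by all of $N$. But $N \cong V_4$, $S_3$, or $D_5$ is a non-cyclic finite subgroup of $\so(3)$ and therefore has no global fixed point on $\Sigma$, a contradiction. (For $r = 2$ one may alternatively run the commuting $V_4$ argument of Lemma~\ref{L:n2} verbatim, using that each involution of $A_5$ lies in a unique Sylow $2$-subgroup.)
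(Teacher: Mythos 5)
Your proof is correct, but it follows a genuinely different route from the paper's. The paper handles order $2$ exactly as you do (via Lemma~\ref{L:n2}, since every involution lies in an $A_4$ subgroup), but for $r = 3, 5$ it splits according to Du Val's classification in Fact~\ref{F:A4A5structure}: in the dodecahedral case it uses the fact that every order-$r$ axis is inverted by an involution, which (since $V$ and $W$ are preserved setwise) forces the two fixed vertices to sit at the two global fixed points of the action, so that all order-$r$ axes would coincide; in the $4$-simplex case, order-$5$ elements are glide rotations with no fixed points, and order $3$ is eliminated by a Burnside count on $W$, which forces $n \equiv 40 \pmod{60}$ and then an orbit of size $40$, impossible since orbit sizes divide $60$. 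Your argument replaces both halves with a single uniform mechanism: the normalizer $N_G(\orb{g})$ (correctly identified as $V_4$, $S_3$, $D_5$) fixes $v_0$ and $w_0$ by the conjugation argument; an involution $\tau \in N$ with axis distinct from $C_g$ (distinct because $\tau$ inverts $g$ when $r$ is odd, and because distinct involutions have distinct axes when $r=2$) forces $v_0, w_0$ to be the antipodal intersection points of two great circles; and the first-crossing point of the edge $\overline{v_0 w_0}$ with the equatorial $2$-sphere $\Sigma$ is then a global fixed point of $N$ acting on $\Sigma$ through the stabilizer of $v_0$, which is impossible since that would place the non-cyclic group $N$ inside the cyclic stabilizer $\so(2)$ of a point of $S^2$ (the restriction $N \to \so(3)$ is faithful, since an element of $\so(4)$ fixing $v_0$ and acting trivially on $v_0^{\perp}$ is the identity --- a point you gloss but which is immediate). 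What your approach buys is independence from the dodecahedron/simplex dichotomy and from the Burnside computation: it treats all three orders at once and would apply to any group whose cyclic subgroups of prime order have non-cyclic normalizers; it even makes the deferral of $r=2$ to Lemma~\ref{L:n2} redundant. What the paper's approach buys is economy of technique --- it reuses the Burnside machinery already deployed throughout Section~2 and the explicit geometry of Facts~\ref{F:A4A5structure} and \ref{F:A5orbits}, avoiding the point-set argument about ordered crossings along an embedded (possibly wild) arc, though your crossing argument is in fact sound for any continuous embedded edge.
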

\begin{proof}
By Lemma \ref{L:n2}, no element of $G$ of order 2 can fix exactly one vertex (since any element of order 2 is also an element of a subgroup of $G$ isomorphic to $A_4$).

By Fact \ref{F:A4A5structure}, $\widehat{G}$ is either the group of symmetries of a regular solid dodecahedron, or the group of symmetries of a regular 4-simplex.  We will first consider the case when $\widehat{G}$ is the group of symmetries of a dodecahedron.  Assume that an element of $G$ of odd order $r$ ($r$ is 3 or 5) fixes exactly one vertex of $V$.  By the Fixed Vertex Property, every element of order $r$ will fix exactly one vertex of $V$.  By the Automorphism Theorem, the elements of order $r$ also fix exactly one vertex of $W$.  Let $A$ be an axis of rotation of order $r$, containing vertices $v \in V$ and $w \in W$.  Then $A$ is inverted by an element $\f$ of order 2.  Since the action of $G$ fixes $V$ and $W$ setwise, by Lemma \ref{L:f(V)=V}, $\f$ must fix $v$ and $w$, so these vertices must be at the center of the dodecahedron and the center of its complement.  But then all the rotations of order $r$ fix $v$ and $w$, and hence the edge between them.  Therefore all rotations of order $r$ share the same axis of rotation, which contradicts the assumption that $\widehat{G}$ is the group of symmetries of a solid regular dodecahedron.  So an element of odd order cannot fix exactly one vertex of $V$.

\medskip
We now consider the case when $\widehat{G}$ is the group of symmetries of a regular 4-simplex.  Then elements of $\widehat{G}$ of order 5 are glide rotations which have no fixed points.  It only remains to consider elements of order 3.  

Assume an element of $G$ of order 3 fixes exactly one vertex of $V$, and so (as above) every element of order 3 fixes exactly one vertex of $V$ and one vertex of $W$.  By Lemmas \ref{L:n2} and \ref{L:n3>2}, the involutions of $G$ cannot fix one or two vertices of each of $V$ and $W$.  So, by the Automorphism Theorem and the proof of Theorem \ref{T:A4necessity}, each involution fixes $4l$ vertices in $V$ and none in $W$, where $l \in \Z$.  Then, if we apply Burnside's Lemma to the action of $G$ on $W$ we find the number of vertex orbits is $\frac{1}{60}(n + 15(0) + 20(1) + 24(0)) = \frac{1}{60}(n + 20)$.  For this to be an integer, we must have $n = 60k + 40$ for some integer $k$, and there are $k+1$ orbits.  Since each orbit has size at most 60, there must be $k$ orbits of size 60 and one orbit of size 40.  But this is impossible, since 40 is not a factor of 60.

Therefore, the elements of order 3 in $G$ cannot fix exactly one vertex in $V$.
\end{proof}

\begin{thm} \label{T:A5necessity}
If a complete bipartite graph $K_{n,n}$ has an embedding $\Gamma$ in $S^3$ such that $A_5 \leq \TSG(\Gamma)$, then $n \equiv 0, 2, 12, 20, 30, 32, 42, 50 \pmod{60}$.
\end{thm}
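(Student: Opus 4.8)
The plan is to follow the same template as the proof of Theorem \ref{T:A4necessity}, replacing $A_4$ by $A_5$ and updating the conjugacy data. First I would invoke the Isometry Theorem to re-embed $K_{n,n}$ as some $\Delta$ on which $G\cong A_5$ is induced by an isomorphic subgroup $\widehat{G}\leq\so(4)$, and apply Lemma \ref{L:f(V)=V} to guarantee that every element of $G$ fixes $V$ (and $W$) setwise. Since $A_5$ has one identity, $15$ involutions, $20$ elements of order $3$, and $24$ elements of order $5$, Burnside's Lemma applied to the action of $G$ on $V$ gives
$$\frac{1}{60}\left(n + 15\,n_2^v + 20\,n_3^v + 24\,n_5^v\right) \in \Z,$$
so that $n \equiv -15\,n_2^v - 20\,n_3^v - 24\,n_5^v \pmod{60}$. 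It then suffices to pin down the admissible values of $n_2^v$, $n_3^v$, and $n_5^v$ modulo small numbers.

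Next I would determine these fixed-vertex counts. By Lemma \ref{L:nvneq1} no element of $G$ fixes exactly one vertex of $V$, so $n_2^v, n_3^v, n_5^v \neq 1$. The Automorphism Theorem says each count is either $2$ (with the same count in $W$) or a multiple of the element's order (with $0$ in $W$); discarding the value $1$ therefore forces $n_3^v = 2$ or $3 \mid n_3^v$. For order $5$, Fact \ref{F:A4A5structure} tells me the order-$5$ elements are either rotations (dodecahedral case) or fixed-point-free glide rotations (the $4$-simplex case, where $n_5^v = 0$); in both cases $n_5^v = 2$ or $5 \mid n_5^v$. For the involutions I would pass to a subgroup $H \cong A_4$ of $G$, which is induced by a subgroup of $\so(4)$ and whose involutions fix the same number of vertices as all involutions of $G$ by the Fixed Vertex Property; then Lemma \ref{L:n2=4l} applies to $H$, giving $4 \mid n_2^v$ whenever $n_2^w = 0$, so that $n_2^v = 2$ or $4 \mid n_2^v$.

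Finally I would substitute into the congruence. Modulo $60$ each term contributes independently: $-15\,n_2^v \equiv 30$ or $0$ (according as $n_2^v = 2$ or $4 \mid n_2^v$), $-20\,n_3^v \equiv 20$ or $0$, and $-24\,n_5^v \equiv 12$ or $0$. Running through the $2^3 = 8$ combinations produces exactly $n \equiv 0, 2, 12, 20, 30, 32, 42, 50 \pmod{60}$, which I would record in a table as in Theorem \ref{T:A4necessity}. The final arithmetic is routine; the main obstacle is the bookkeeping in the middle step, namely correctly justifying the allowed values of each $n_i^v$ — in particular transferring the $A_4$-specific Lemma \ref{L:n2=4l} to an $A_4$ subgroup of $A_5$ via the Fixed Vertex Property, and separately handling the two realizations of $A_5$ in $\so(4)$ when analyzing the elements of order $5$.
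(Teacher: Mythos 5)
Your proposal is correct and takes essentially the same route as the paper: the Isometry Theorem plus Lemma \ref{L:f(V)=V}, Burnside's Lemma on $V$ giving $\frac{1}{60}(n + 15n_2^v + 20n_3^v + 24n_5^v) \in \Z$, exclusion of the value $1$ via Lemma \ref{L:nvneq1}, the Automorphism Theorem and Lemma \ref{L:n2=4l} to pin each $n_i^v$ to $2$ or a multiple of $i$, and the same eight-case table yielding $n \equiv 0, 2, 12, 20, 30, 32, 42, 50 \pmod{60}$. Your extra care in transferring Lemma \ref{L:n2=4l} to an $A_4$ subgroup of $A_5$ via the Fixed Vertex Property merely makes explicit a step the paper leaves implicit.
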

\begin{proof}
Let $G \leq \TSG(\Gamma)$ such that $G \cong A_5$.  By the Isometry Theorem, there is a re-embedding $\Delta$ of $K_{n,n}$ such that $G$ is induced on $\Delta$ by an isomorphic subgroup $\widehat{G} \leq \so(4)$.  Recall that $n_i^v$ and $n_i^w$ denote the number of vertices in $V$ and $W$ fixed by the elements of $G$ of order $i$.  Applying Burnside's Lemma to the action of $G$ on the set $V$ (which is fixed setwise by Lemma \ref{L:f(V)=V}), we find that $\frac{1}{60}(n + 15n_2^v + 20n_3^v + 24n_5^v) \in \Z$.  So, given values of $n_2^v$, $n_3^v$ and $n_5^v$, we can determine $n$, modulo 60.

From the Automorphism Theorem and Lemma \ref{L:n2=4l}, we know that $n_2^v = 1$, $2$ or $4l$; $n_3^v = 1$, $2$ or $3k$; and $n_5^v = 1$, $2$ or $5m$.  By Lemma \ref{L:nvneq1}, $n_2^v \neq 1$, $n_3^v \neq 1$, and $n_5^v \neq 1$.  The table below summarizes the remaining possibilities, and gives the value of $n \pmod{60}$ in each case.

\begin{center}
\begin{tabular}{|c|c|c|c|}
	\hline
	$n_2^v$ & $n_3^v$ & $n_5^v$ & $n\pmod{60}$ \\ \hline
	2 & 2 & 2 & 2 \\ \hline
	2 & 2 & $5m$ & 50 \\ \hline
	2 & $3k$ & 2 & 42 \\ \hline
	2 & $3k$ & $5m$ & 30 \\ \hline
	$4l$ & 2 & 2 & 32 \\ \hline
	$4l$ & 2 & $5m$ & 20 \\ \hline
	$4l$ & $3k$ & 2 & 12 \\ \hline
	$4l$ & $3k$ & $5m$ & 0 \\ \hline
\end{tabular}
\end{center}
\end{proof}

\bigskip

We will now show that, in addition to Theorem \ref{T:A5necessity}, we also need to require that $n$ is larger than 30.  
 
\begin{lemma} \label{L:n3v=2}
Let $G\leq \Aut(K_{n,n})$ which is isomorphic to $A_5$.  Suppose there is an embedding $\Gamma$ of $K_{n,n}$ in $S^3$ such that $G$ is induced on $\Gamma$ by an isomorphic subgroup $\widehat{G}\leq\so(4)$.  If either every element of $G$ of order 3 fixes two vertices of $V$ and two vertices of $W$, or every element of $G$ of order 5 fixes two vertices of $V$ and two vertices of $W$, then there are two vertices in $V$ (or two in $W$) which are fixed by every element of $G$.
\end{lemma}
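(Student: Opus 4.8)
The plan is to mimic the strategy of Lemmas~\ref{L:n2=4} and~\ref{L:n3>2}, with the elements of order $r\in\{3,5\}$ in the hypothesis playing the role that the involutions played there. Fix such an $r$ and an order-$r$ element $g$; by hypothesis it fixes two vertices of $V$ and two of $W$. Since order-$2$ and order-$3$ elements are always rotations, and (when $r=5$ and these elements have fixed points) the order-$5$ elements are rotations as well, $g$ is a rotation and its four fixed vertices lie on the geodesic circle $\mathrm{Fix}(g)$. The first key step is to observe that each edge joining a fixed $V$-vertex to a fixed $W$-vertex is carried to itself by $g$ (the graph is simple, so $g$ fixes its endpoints and hence the edge setwise), and a finite-order homeomorphism of an arc fixing both endpoints is the identity; so each such edge lies in $\mathrm{Fix}(g)$. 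As the embedding has no edge crossings, these four edges can be laid out on the circle only if the four vertices alternate between $V$ and $W$, in which case the four edges cover the circle. Thus \emph{the fixed circle of every order-$r$ element is exactly a $4$-cycle of the graph}, so each of its points is a vertex or an interior point of an edge.

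Next I would identify which subgroup of $\so(4)$ we are in. By Fact~\ref{F:A4A5structure}, $\widehat G$ is either the solid dodecahedral group or the $4$-simplex group. If the hypothesis concerns order $5$, then order-$5$ elements have fixed points, hence are rotations, so $\widehat G$ must be the dodecahedral group. In the dodecahedral case Fact~\ref{F:A5orbits} supplies two points $p,p'$ fixed by all of $G$ and lying on every rotation axis, and I claim these are the two vertices we seek. If one of them, say $p$, were \emph{not} a vertex, it would be an interior point of an edge $e$ on some order-$r$ axis; an order-$r$ rotation about a different axis fixes $p$ but moves the endpoints of $e$ (which are not fixed by all of $G$), producing a second edge through the interior point $p$ and hence a crossing, a contradiction. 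So both $p,p'$ are vertices. If they lay in different parts, the edge $\overline{pp'}$ would be fixed by all of $G$, hence by the arc argument would lie on every order-$r$ axis; but two distinct such axes meet only in $\{p,p'\}$, so this edge would be contained in a two-point set, which is absurd. Hence $p,p'$ lie in the same part and are fixed by every element of $G$, giving the conclusion.

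The remaining case is the $4$-simplex group under the order-$3$ hypothesis, and here the conclusion is in fact \emph{false} for the group itself: the $4$-simplex action has no globally fixed point (a fixed point would force every element to be a rotation about an axis through it, i.e.\ the dodecahedral case, contradicting that order-$5$ elements are glide rotations). So the goal here is instead to show the hypothesis cannot occur. Using the first step, each of the five simplex vertices lies on several order-$3$ axes, and the crossing argument forbids it from being an interior edge point, so all five are graph vertices; by Lemma~\ref{L:f(V)=V} they lie in one part, say $V$. For an order-$3$ element $g$ the two fixed simplex vertices account for its two fixed $V$-vertices, and the involution inverting $g$'s axis swaps these two (opposite) $V$-vertices on the $4$-cycle, hence acts on the circle as the reflection that fixes the two $W$-vertices of $g$, while also fixing a third simplex vertex in $V$. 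Thus every involution fixes at least two vertices of $W$ and at least two of $V$, and the case analysis behind Theorem~\ref{T:A4necessity} (together with Lemma~\ref{L:n2}) forces it to fix exactly two of each; but then Lemma~\ref{L:n3>2} produces two vertices fixed by all of $G$, which is impossible here. This contradiction rules out the $4$-simplex possibility.

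I expect this last step to be the main obstacle. In the dodecahedral case the two desired vertices are handed to us as the central points, and the work is only to show they are genuine vertices in a common part. In the $4$-simplex case there is no globally fixed point at all, so one cannot exhibit the two vertices directly and must instead force a contradiction — which requires combining the ``fixed circle is a $4$-cycle'' observation with a careful analysis of how the inverting involution permutes the fixed vertices, and then reducing back to the already-established Lemma~\ref{L:n3>2}.
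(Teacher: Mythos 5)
Your proof is correct, and for the order-$3$ hypothesis it takes a genuinely different route from the paper's. Your first step (under the hypothesis, the fixed circle of each order-$r$ element is exactly an embedded $4$-cycle) and your dodecahedral argument (the two central points lie on every axis, must be graph vertices since distinct axis-cycles can only meet at vertices, and must lie in a common part lest a common fixed edge force all the axes to coincide) are essentially the paper's treatment of the order-$5$ case. For the order-$3$ hypothesis, however, the paper never splits according to the conjugacy class of $\widehat{G}$ in $\so(4)$: by Fact \ref{F:A4A5structure} every $A_4$-subgroup of $\widehat{G}$ is a tetrahedral rotation group, so within each such subgroup the order-$3$ axes are concurrent at two points, which as in the order-$5$ case must be a pair of vertices in a common part; the paper then shows all five $A_4$-subgroups fix the \emph{same} pair by a purely combinatorial argument -- if $S_1$ and $S_2$ fixed different pairs, chasing the elements $\gamma_{ij}$ of order $3$ in $S_i \cap S_j$ would force two such elements to share an axis, putting an order-$3$ element of $A_5$ in three distinct $A_4$-subgroups, which is impossible -- and concludes because the $A_4$-subgroups generate $A_5$. (In the $4$-simplex case that conclusion makes the hypothesis unsatisfiable, but the paper's proof never needs to notice this.) Your route instead isolates the $4$-simplex case and refutes it by contradiction; this costs more geometry but is sound, and it incidentally locates the fixed $W$-vertices at the edge-midpoints and face-centroids of the simplex -- the same configuration the paper exploits later in the proof of Lemma \ref{L:n>30}.

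One step in your $4$-simplex analysis needs a better justification. You assert that because the inverting involution $h$ swaps the two fixed simplex vertices $v_4, v_5$ of $g$, it ``acts on the circle as the reflection that fixes the two $W$-vertices of $g$.'' But swapping two opposite vertices is equally consistent with $h$ acting \emph{freely} on $\fix(g)$ (a half-turn of the circle), in which case $h$ would swap the two $W$-vertices rather than fix them; then $h$ would be known to fix vertices only in $V$, the $(2m,0)$ case of the Automorphism Theorem would remain open, and your reduction to Lemma \ref{L:n3>2} would collapse. To close this, exhibit a fixed point of $h$ on $\fix(g)$: since $v_4$ and $v_5$ are not antipodal, $\fix(g)$ is the unique great circle through them, hence contains the geodesic simplex edge $\overline{45}$ and in particular its midpoint, which $h$ fixes. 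A nontrivial involutive homeomorphism of a circle with a fixed point is orientation-reversing with exactly two fixed points; since $h$ then preserves each of the two arcs bounded by $\{v_4, v_5\}$ (it fixes the midpoint lying in one of them), and the $W$-vertices lie one in each arc by the alternation from your first step, $h$ fixes both $W$-vertices as you claim. With that one-line supplement, your argument is complete.
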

\begin{proof}
We first assume that every element of $G$ of order 5 fixes two vertices of $V$ and two vertices of $W$.  Since the elements of order 5 have non-empty fixed point sets, they are all induced by rotations, and hence $\widehat{G}$ is the group of symmetries of a regular solid dodecahedron, by Fact \ref{F:A4A5structure}.  This means all the axes of rotation intersect in two points.  Since each rotation of order 5 fixes two vertices from each of $V$ and $W$, it fixes a subgraph of $\Gamma$ isomorphic to $K_{2,2}$, which is homeomorphic to $S^1$, so the embedded subgraph is the axis of rotation.  Edges of $\Gamma$ may not intersect, so these axes must all intersect at two vertices.  The two vertices must both be in $V$ (without loss of generality), or all the elements of order 5 would fix a common edge, and hence not have distinct axes.  The two vertices are at the center of the solid dodecahedron, and the center of its complement, so they are fixed by every element of $G$.

Now assume that every element of $G$ of order 3 fixes two vertices of $V$ and two vertices of $W$.  By Fact \ref{F:A4A5structure}, each subgroup of $G$ isomorphic to $A_4$ is the group of symmetries of a regular solid tetrahedron.  This means all the axes of rotation in one such subgroup intersect in two points.  As in the previous paragraph, there are two vertices of $V$ (or two of $W$) fixed by every element of each subgroup.

We still need to show that different subgroups will fix the same pair of vertices.  Let $S_1$, $S_2$ and $S_3$ denote three of the five subgroups of $G$ isomorphic to $A_4$.  For each pair $i, j$, there is an element $\gamma_{ij}$ of order 3 such that $\gamma_{ij} \subseteq S_i \cap S_j$.  Assume $S_1$ and $S_2$ do not fix the same pair of vertices.  Then $\gamma_{12}$ must fix both pairs of vertices; hence one pair of vertices must be in $V$ and the other in $W$.  Say that $S_1$ fixes $v$ and $v'$ and $S_2$ fixes $w$ and $w'$.  So $\gamma_{13}$ fixes $v$ and $v'$, and $\gamma_{23}$ fixes $w$ and $w'$.  But then $S_3$ must fix one of these pairs; without loss of generality, $S_3$ fixes $v$ and $v'$, so $\gamma_{23}$ fixes $v$ and $v'$.  This means that $\gamma_{12}$ and $\gamma_{23}$ fix the same $K_{2,2}$ subgraph, and hence share the same axis, which is impossible, because no element of $A_5$ of order 3 is in three distinct subgroups isomorphic to $A_4$.

So all the subgroups must fix the same pair of vertices, both in $V$ or both in $W$.  Since these subgroups generate $A_5$, this means these two vertices are fixed by every element of $G$.
\end{proof}

\begin{lemma} \label{L:n>30}
If a complete bipartite graph $K_{n,n}$ has an embedding $\Gamma$ in $S^3$ such that $A_5 \leq \TSG(\Gamma)$, then $n > 30$.
\end{lemma}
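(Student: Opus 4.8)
The plan is to combine the fixed-point data extracted in Theorem~\ref{T:A5necessity} with the orbit-size restrictions of Fact~\ref{F:A5orbits} to eliminate every value of $n$ permitted by the congruence but not exceeding $30$. By Theorem~\ref{T:A5necessity} we have $n \equiv 0,2,12,20,30,32,42,50 \pmod{60}$, so the only values with $n \leq 30$ are $n = 2, 12, 20, 30$, and it suffices to rule these out. The value $n=2$ is immediate: $|\Aut(K_{2,2})| = 8 < 60 = |A_5|$, so $A_5$ cannot act faithfully on $K_{2,2}$. For each remaining value I would read off the triple $(n_2^v, n_3^v, n_5^v)$ from the table in the proof of Theorem~\ref{T:A5necessity}, namely $(4l, 3k, 2)$ for $n=12$, $(4l, 2, 5m)$ for $n=20$, and $(2, 3k, 5m)$ for $n=30$.

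For $n = 12$ and $n = 20$ the strategy is to first produce two vertices fixed by all of $G$ and then count orbits. When $n=12$ we have $n_5^v = 2$; since $2$ is not a multiple of $5$, the Automorphism Theorem forces $n_5^w = 2$ as well, and Lemma~\ref{L:n3v=2} (order-$5$ hypothesis) then yields two vertices, say in $V$, fixed by every element of $G$. In particular the order-$5$ elements have fixed points, so by Fact~\ref{F:A4A5structure} they are rotations and $\widehat{G}$ is the dodecahedral group; thus Fact~\ref{F:A5orbits} applies, every orbit in $V$ has size $1$, $12$, $20$, $30$, or $60$, and only two points (the two centers) lie in orbits of size $1$. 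The remaining $12 - 2 = 10$ vertices of $V$ would then have to fill orbits of size at least $12$, which is impossible. For $n = 20$ the argument is parallel, using $n_3^v = 2$ (hence $n_3^w = 2$, as $3 \nmid 2$) and the order-$3$ hypothesis of Lemma~\ref{L:n3v=2}; the two global fixed vertices are fixed by the order-$5$ elements as well, again forcing the dodecahedral model, and the leftover $20 - 2 = 18$ vertices cannot be partitioned into orbits of size at least $12$.

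The case $n = 30$ is the main obstacle and needs a different entry point, since here it is the \emph{involutions}, rather than the order-$3$ or order-$5$ elements, that fix exactly two vertices of each part: $n_2^v = 2$, and because $4 \nmid 2$, Lemma~\ref{L:n2=4l} rules out $n_2^w = 0$, so $n_2^w = 2$ by the Automorphism Theorem. Hence the involutions fix exactly two vertices of $V$ and two of $W$, and Lemma~\ref{L:n3>2} produces two vertices, say in $V$, fixed by every element of $G$. At this point I must split on the two realizations of $A_5$ given by Fact~\ref{F:A4A5structure}. If $\widehat{G}$ is the dodecahedral group, the orbit-size argument applies and $30 - 2 = 28$ cannot be written as a sum of orbit sizes drawn from $\{12, 20, 30, 60\}$, a contradiction. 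If instead $\widehat{G}$ is the symmetry group of the $4$-simplex, its order-$5$ elements are glide rotations with no fixed points, contradicting the requirement that they fix the two global fixed vertices. Either way $n = 30$ is excluded, completing the proof that $n > 30$. The chief subtlety throughout is tracking, via the Automorphism Theorem together with Lemma~\ref{L:n2=4l}, exactly when a fixed-point count of $2$ in $V$ forces the matching count of $2$ in $W$, since this is what licenses Lemmas~\ref{L:n3v=2} and \ref{L:n3>2}; and for $n=30$ the additional need to dispose of the $4$-simplex realization separately, because the involution-based lemma alone does not pin down the geometric model.
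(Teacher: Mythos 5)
Your proof is correct, and for $n = 2, 12, 20$ it follows the paper essentially verbatim: same reduction to four residues via Theorem \ref{T:A5necessity}, same use of Lemma \ref{L:n3v=2} to produce two globally fixed vertices, same forcing of the dodecahedral model, and same orbit counts ($10$ and $18$ leftover vertices against Fact \ref{F:A5orbits}). Where you genuinely diverge is $n = 30$. The paper splits on whether $m = 0$ in $n_5^v = 5m$: for $m \neq 0$ it forces the dodecahedral model and then argues geometrically that the involutions' axes are embedded $K_{2,2}$ subgraphs all meeting at the two centers, so that graph vertices must sit at those centers (edges meet only at vertices), before running the $28$-vertex orbit count; for $m = 0$ it gives a bespoke $4$-simplex analysis, locating forced vertices of $V$ at tetrahedral centers and forced vertices of $W$ at edge midpoints (an orbit of size $10$), contradicting the Burnside count of exactly one $W$-orbit of size $30$. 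You instead invoke Lemma \ref{L:n3>2} directly --- its hypothesis $n_2^v = n_2^w = 2$ is available exactly as you track it, with Lemma \ref{L:n2=4l} excluding $n_2^w = 0$ --- to obtain two vertices fixed by all of $G$; this disposes of the $4$-simplex model in one line (its order-$5$ elements are fixed-point-free glide rotations) and reduces the dodecahedral model to the same ``$28$ is not a sum of $12, 20, 30, 60$'' count. Your route is shorter and uniform across both geometric models. The trade-off is that it leans harder on Lemma \ref{L:n3>2} in the $4$-simplex regime, where the step ``two vertices fixed by \emph{all} the involutions'' in that lemma's proof is delicate: in the $4$-simplex model the axes of involutions from distinct Klein four-subgroups are disjoint, so Lemma \ref{L:n2=4} yields a fixed pair only per Klein subgroup, and the $2{+}2$ hypothesis there has to be contradicted rather than exploited. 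Since the paper itself applies Lemma \ref{L:n3>2} to $A_5$ in the $4$-simplex case (in the proof of Lemma \ref{L:nvneq1}), your citation is consistent with the paper's own usage; the paper's more laborious $n = 30$ argument can be read as avoiding that reliance, and your version is a legitimate, cleaner alternative given the lemma as stated.
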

\begin{proof}
Let $G \leq \TSG(\Gamma)$ such that $G \cong A_5$.  By the Isometry Theorem, there is a re-embedding $\Delta$ of $K_{n,n}$ such that $G$ is induced on $\Delta$ by an isomorphic subgroup $\widehat{G} \leq \so(4)$.  From Theorem \ref{T:A5necessity}, the only possibilities with $n \leq 30$ are $n = 2, 12, 20, 30$.  The automorphism group of $K_{2,2}$ is isomorphic to $D_4$, which does not have a subgroup isomorphic to $A_5$, so $n \neq 2$.

Assume $n = 12$.  Then, from Theorem \ref{T:A5necessity}, $n_5^v = n_5^w = 2$, $n_3^v = 3k$, $n_3^w = 0$, $n_2^v = 4l$ and $n_2^w = 0$.  Fact \ref{F:A4A5structure} implies that $\widehat{G}$ is the group of symmetries of a dodecahedron, and Lemma \ref{L:n3v=2} implies there are two vertices in $V$ which are fixed by every element of $G$.  These two vertices each have an orbit of size 1, and lie at the center of the dodecahedron and the center of its complement.  By Fact \ref{F:A5orbits}, all other points of $S^3$ have at least 12 points in their orbit under the action of $\widehat{G}$, but there are only 10 remaining vertices.  So $n \neq 12$.

Now assume $n = 20$.  Then, from Theorem \ref{T:A5necessity}, $n_3^v = n_3^w = 2$, $n_5^v = 5m$ and $n_2^v = 4l$.  By Lemma \ref{L:n3v=2}, this means there are two vertices in $V$ which are fixed by every element of $G$.  Therefore, the elements of $\widehat{G}$ are all rotations, which means $\widehat{G}$ is the group of symmetries of a dodecahedron.  So these two vertices each have an orbit of size 1, and lie at the center of the dodecahedron and the center of its complement.  By Fact \ref{F:A5orbits}, the remaining 18 vertices must be divided into orbits of sizes 12, 20, 30 or 60, which is impossible.  Hence $n \neq 20$.

Finally, we assume $n = 30$.  Then, from Theorem \ref{T:A5necessity}, $n_2^v = n_2^w = 2$, $n_3^v = 3k$, $n_3^w = 0$, $n_5^v = 5m$ and $n_5^w = 0$.  If $m \neq 0$, then the elements of order 5 are rotations, and $\widehat{G}$ induces the group of rotations of a regular solid dodecahedron by Fact \ref{F:A4A5structure}.  Then the involutions are each rotations with two vertices from $V$ and two vertices from $W$ on the axis of rotation; therefore, the entire axis is a subgraph of $\Gamma$ isomorphic to $K_{2,2}$.  Since edges of $\Gamma$ can only meet at vertices, and the axes of all the involutions meet at the center of the dodecahedron and the center of its complement, there must be vertices of $\Gamma$ at these two points.  Since $n_3^w = n_5^w = 0$, they are both vertices of $V$, and these two points each have an orbit of size 1.  By Fact \ref{F:A5orbits} the remaining 28 vertices of $V$ must be partitioned into orbits of size 12, 20, 30 or 60, which is impossible.  Therefore, we must have $m = 0$, and by Fact \ref{F:A4A5structure}, $\widehat{G}$ induces the rotations of the 1-skeleton of a regular 4-simplex in $S^3$.

If the vertices of the 4-simplex are denoted 1, 2, 3, 4 and 5, then each element of $\widehat{G}$ can be represented as an (even) permutation of these points.  Consider the involution $(12)(34)(5)$ (written in cycle notation).  The axis of rotation for this involution intersects the axes for $(13)(24)(5)$ and $(14)(23)(5)$ at the center of a regular tetrahedron (one of the 3-dimensional ``faces" of the 4-simplex) and the center of its complement.  It also intersects the axes for $(12)(35)(4)$ and $(12)(45)(3)$ at the midpoint of the edge $\overline{12}$ and the axes for $(34)(15)(2)$ and $(34)(25)(1)$ at the midpoint of $\overline{34}$.  Since each axis for an involution is a subgraph of $\Gamma$, each of the points of intersection must be a vertex of $\Gamma$.  Since the rotations of order 3 fix vertices of $V$ and none of $W$, then the points at the center of the tetrahedron and its complement are in $V$ and the points at the midpoints of the edges are in $W$.  The points at the midpoints of the edges have an orbit of size 10 under the action of $\widehat{G}$ (since the 4-simplex has 10 edges), so the vertices of $W$ are partitioned into at least two orbits.

However, by Burnside's Lemma, the number of vertex orbits of $W$ is $\frac{1}{60}(30 + 15(2) + 20(0) + 24(0)) = 1$.  This contradiction means that $n$ cannot be equal to 30.
\end{proof}


\section{Embeddings with $\TSG(\Gamma) \cong S_4$}

Since $A_4 \leq S_4$, Theorem \ref{T:A4necessity} tells us that if $K_{n,n}$ has an embedding $\Gamma$ with $\TSG(\Gamma) \cong S_4$, then $n \equiv 0, 2, 4, 6 {\rm \ or\ } 8 \pmod{12}$.  We can also restrict ourselves to $n \geq 4$, since otherwise the automorphism group of $K_{n,n}$ does not contain a subgroup isomorphic to $S_4$; and we know $n \neq 6$, by Lemma \ref{L:n=6}.  For each of these values of $n$, we will construct embeddings of $K_{n,n}$ whose topological symmetry group is isomorphic to $S_4$.  These constructions will make use of tools developed by Flapan, Naimi and the author in \cite{fmn1} and \cite{fmn2}:

\begin{edge} \cite{fmn2} 
Let $G$ be a finite subgroup of $\Diff(S^3)$ , and let $\g$ be a graph whose vertices are embedded in $S^3$ as a set $V$ which is invariant under $G$ such that $G$ induces a faithful action on $\g$.  Suppose that adjacent pairs of vertices in $V$ satisfy the following hypotheses:
\begin{enumerate}
	\item If a pair is pointwise fixed by non-trivial elements $h, g \in G$, then $\fix(h) = \fix(g)$.
	\item For each pair $\{v, w\}$ in the fixed point set $C$ of some non-trivial element of $G$, there is an arc $A_{vw} \subseteq C$ bounded by $\{v,w\}$ whose interior is disjoint from $V$ and from any other such arc $A_{v'w'}$ 
	\item If a point in the interior of some $A_{vw}$ or a pair $\{v,w\}$ bounding some $A_{vw}$ is setwise invariant under an $f \in G$, then $f(A_{vw}) = A_{vw}$.
	\item If a pair is interchanged by some $g \in G$, then the subgraph of $\g$ whose vertices are pointwise fixed by $g$ can be embedded in a proper subset of a circle.
	\item If a pair is interchanged by some $g \in G$, then $\fix(g)$ is non-empty, and $\fix(h) \neq \fix(g)$ if $h \neq g$.
\end{enumerate}
Then there is an embedding of the edges of $\g$ in $S^3$ such that the resulting embedding of $\g$ is setwise invariant under $G$.
\end{edge}

\begin{subgroup}
Let $\Gamma$ be an embedding of a 3-connected graph $S^3$, and let $H \leq \TSG(\Gamma)$.  Let $T$ be a set of edges in $\Gamma$ which is setwise invariant under $H$ and let $e \in T$.  Suppose that for any $\f \in \TSG(\Gamma)$ which pointwise fixes $e$ and setwise fixes the $H$-orbit of each edge in $T$, one of the following holds:
\begin{enumerate}
	\item \cite{fmn1} $\f$ also pointwise fixes a subgraph of $\Gamma$ that cannot be embedded in $S^1$, or
	\item there is a $\psi \in \TSG(\Gamma)$ such that $\fix(\f) \cap \fix(\psi)$ contains a pair of adjacent vertices but $\fix(\f)$ is not a subset of $\fix(\psi)$.
\end{enumerate}
Then there is an embedding $\Gamma'$ of $\Gamma$ with $H \cong \TSG(\Gamma')$.  Moreover, the embeddings $\Gamma$ and $\Gamma'$ agree on the vertices of the graph.
\end{subgroup}

The proof of the Subgroup Theorem when the first condition holds was given in \cite{fmn1}.  The proof when the second condition holds is very similar, and relies on the fact that the axes of two non-trivial rotations in $\so(4)$ are either the same, disjoint, or intersect in exactly two points.  In particular, the axes cannot intersect in only an arc.  We leave the details to the reader.

\bigskip

We will construct our embeddings by first embedding the vertices of $K_{n,n}$ so that there is a faithful action of $S_4$ on the vertices which satisfies the hypotheses of the Edge Embedding Lemma.  The Edge Embedding Lemma then allows us to embed the edges of the graph.  Finally, we use the Subgroup Theorem to show that the graph can be re-embedded so that the topological symmetry group is isomorphic to $S_4$.  In our initial embeddings of the vertices, we will use two different groups of isometries of $S^3$ which are isomorphic to $S_4$.  Let $G$ be the group of symmetries of a solid geodesic cube $\sigma$ (see Figure~\ref{F:S4Cube}), and let $H$ be the group of symmetries of the 1-skeleton of a regular geodesic tetrahedron $\tau$ (see Figure~\ref{F:S4}).  Also, let $E \leq G$ be the subgroup isomorphic to $A_4$ (so the involutions in $E$ are the squares of the rotations in $G$ of order 4) and $F \leq H$ be the subgroup isomorphic to $A_4$ (so $F$ is the group of symmetries of the solid regular tetrahedron $\tau$).  We let $s$ and $s'$ denote the center of the cube $\sigma$ and the center of its complement; and we let $t$ and $t'$ denote the center of the tetrahedron $\tau$ and the center of its complement.

\begin{figure} [h]
\scalebox{.7}{\includegraphics{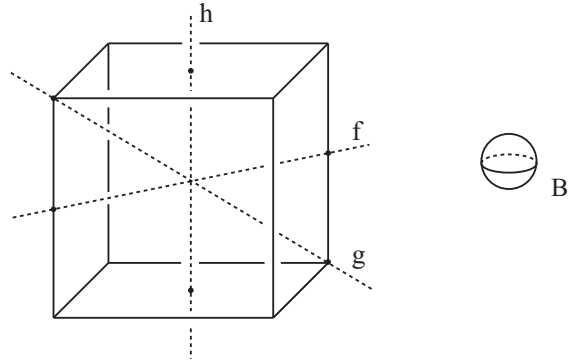}}
\caption{$G$ is the group of symmetries of a solid cube.  $f$, $g$, and $h$ are representative rotations around the axes shown of orders 2, 3 and 4, respectively; $f$ is a rotation of order 2 which is not the square of a rotation of order 4.}
\label{F:S4Cube}
\end{figure}

\begin{figure} [h]
\includegraphics{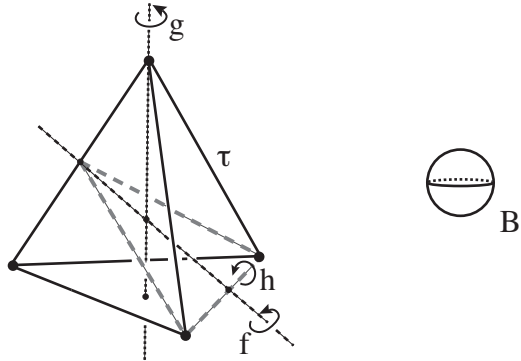}
\caption{$H$ leaves the 1-skeleton of a regular tetrahedron $\tau$ setwise invariant.  $g$ is a rotation of order 3, $f$ is a rotation of order 2 in $F$, and $h$ is a rotation of order 2 in $H - F$.}
\label{F:S4}
\end{figure}

\begin{prop} \label{P:S4Tetrahedron}
Let $n \equiv 0 {\rm \ or\ } 4 \pmod{12}$, and $n \geq 4$.  Then $K_{n,n}$ has an embedding $\Gamma$ with $S_4 \cong \TSG(\Gamma)$.
\end{prop}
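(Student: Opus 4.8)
The plan is to follow the three–stage template set up just before the statement: first embed the $2n$ vertices of $K_{n,n}$ as a union of orbits of a subgroup of $\so(4)$ isomorphic to $S_4$, chosen so that the number of vertices fixed by each element matches the profile forced in Section 2; then fill in the edges with the Edge Embedding Lemma; and finally cut the symmetry group down with the Subgroup Theorem. From the proof of Theorem \ref{T:A4necessity}, the targets are: for $n\equiv 0\pmod{12}$ the order-3 elements fix no vertices and the involutions of the $A_4$ subgroup fix a multiple of four vertices of $V$ and none of $W$ (so $n_2^v=4l$, $n_3^v=n_3^w=0$, with $l=0$ permitted); for $n\equiv 4\pmod{12}$ each order-3 element must fix exactly one vertex of $V$ and one of $W$ ($n_3^v=n_3^w=1$), while Burnside's Lemma together with Lemma \ref{L:n2=4l} allows $n_2^v=n_2^w=0$. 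In both cases the order-4 elements and the involutions outside $A_4$ should be arranged to fix no vertices.

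For the vertex placement I would use the cube group $G\cong S_4$ of Figure \ref{F:S4Cube}, taking $V$ and $W$ to be the two regular tetrahedra inscribed in the geodesic cube $\sigma$: the subgroup $E\cong A_4$ preserves each tetrahedron while the elements of $G-E$ interchange them, so by Fact \ref{L:automorphisms} this is a legitimate bipartite action with the odd permutations swapping $V$ and $W$. For $n\equiv 0$ I take $V$ and $W$ to be halves of generic $G$-orbits, so that no non-trivial element fixes any vertex; one generic orbit contributes twelve vertices to each of $V$ and $W$, and adjoining further generic orbits reaches every $n\equiv 0\pmod{12}$. For $n\equiv 4$ I additionally place the eight vertices of $\sigma$ itself, one inscribed tetrahedron in $V$ and the other in $W$: each order-3 rotation then fixes the two ends of a space diagonal, one in each of $V$ and $W$, giving $n_3^v=n_3^w=1$, while the order-2 and order-4 rotations fix none of the cube's vertices. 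Padding again with generic orbits reaches every $n\equiv 4\pmod{12}$ with $n\geq 4$.

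The heart of the argument, and the step I expect to be the main obstacle, is verifying the hypotheses of the Edge Embedding Lemma, the delicate point being that the order-3–fixed vertices must be positioned so as not to be over-fixed. Since every element of $G$ preserves or interchanges $V$ and $W$, the edges are exactly the $V$–$W$ pairs, and I would sort them into those pointwise fixed by a non-trivial element and those interchanged by one. The advantage of using the cube's vertices rather than the vertices of the tetrahedral group $H$ of Figure \ref{F:S4} is hypothesis (1): a cube vertex is fixed only by the order-3 rotations about its space diagonal (which share a fixed circle), whereas a vertex of $H$ would also be fixed by order-2 elements of $H-F$ with different axes, violating (1). For the interchanged pairs one checks (4) and (5): the interchanges come only from the involutions of $G-E$ — never from the order-4 elements, since such an element interchanging $\{v,w\}$ would force its square, an involution of $E$, to fix $v$, which it does not — and each such involution has a non-empty fixed circle shared with no other element of $G$ and fixes no vertex, so its fixed subgraph is empty. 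The genuinely awkward point is the arc conditions (2) and (3) for the order-3–fixed pairs in the $n\equiv 4$ case: the two fixed vertices $v,w$ lie on a common order-3 axis, the connecting edge must be an arc on that axis, and all four such axes pass through the centers $s,s'$ of the cube and its complement; I expect the crux of the proof to be choosing these four arcs equivariantly so that their interiors remain disjoint near $s$ and $s'$ (using that the interchanging involution reflects each axis circle and so carries its arc to itself), and this is where the construction, and the precise choice of the geodesic cube, must be made with care.

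With every hypothesis verified, the Edge Embedding Lemma produces an embedding $\Gamma$ of $K_{n,n}$ setwise invariant under $G$, so $S_4\leq\TSG(\Gamma)$. Finally I would apply the Subgroup Theorem with $H=S_4$ and a suitable $G$-invariant edge set $T$: it remains to check that any $\f\in\TSG(\Gamma)$ pointwise fixing a chosen edge $e$ and setwise fixing each orbit of $T$ either pointwise fixes a subgraph that cannot be embedded in $S^1$ or satisfies the second alternative of the theorem. This yields a re-embedding $\Gamma'$ with $\TSG(\Gamma')\cong S_4$, completing the proof.
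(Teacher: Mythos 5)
Your three-stage template and your $n\equiv 0\pmod{12}$ construction match the paper (the paper uses half-orbits under the group $H$ preserving the $1$-skeleton of a tetrahedron rather than under the cube group $G$, but for free orbits either works, with the same check of hypothesis (5) and the same observation that order-$4$ elements cannot interchange an adjacent pair). The fatal problem is your $n\equiv 4\pmod{12}$ case, at exactly the point you flagged as the crux and hoped to settle by ``choosing these four arcs equivariantly'': no such choice exists. In the cube group every element fixes the centers $s$ and $s'$, so each order-$3$ axis is a great circle containing $v$, $w$, $s$, $s'$, and your fixed pair $\{v,w\}$ consists of the two ends of a space diagonal, which \emph{separate} $s$ from $s'$ on that circle. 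Hence each of the two arcs bounded by $\{v,w\}$ contains $s$ or $s'$ in its interior; with four diagonals and only two such points, two chosen arcs must share an interior point, so hypothesis (2) of the Edge Embedding Lemma fails, and hypothesis (3) fails outright: $s$ lies in the interior of some $A_{vw}$ and is invariant under all of $G$, yet an order-$4$ rotation carries that diagonal's axis to a different diagonal's axis, so $f(A_{vw})\neq A_{vw}$. Nor is this an artifact of a bad arc choice or of using the cube's corners: in any $\widehat{G}$-invariant embedding, an order-$3$ rotation fixing both endpoints of an edge must fix that edge pointwise (a finite-order homeomorphism of an arc fixing both endpoints is the identity), forcing each diagonal edge into its axis and through $s$ or $s'$; and if you slide the fixed pairs elsewhere along the order-$3$ axes, the odd involution preserving such an axis acts on it as a reflection fixing $s$ and $s'$, so a $V$-vertex and its $W$-partner always land on opposite arcs. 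The residue class $n\equiv 4\pmod{12}$ simply cannot be realized inside the cube group.

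This is precisely why the paper switches, for $n\equiv 0,4\pmod{12}$, to the \emph{other} $S_4$ in $\so(4)$: the group $H$ of symmetries of the $1$-skeleton of a regular tetrahedron $\tau$, whose odd elements do not fix a common pair of points but instead swap the inside and outside of $\tau$. Placing $V_4$ and $W_4$ at the corners of two concentric tetrahedra $\tau_1,\tau_2$ with $\tau$ between them puts each fixed pair $\{v_i,w_i\}$ on the same side of its axis circle, so the connecting arc avoids $t$ and $t'$, and the four arcs are disjoint and permuted correctly (Figure \ref{F:K4S4}); the interchanging involutions in $H-F$ reverse each such arc, giving hypothesis (3). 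Separately, your final step is only a promissory note: the paper actually verifies the Subgroup Theorem hypotheses, and for $n=4$ the standard argument fails, since an automorphism fixing the chosen edge pointwise and each $H$-orbit of edges setwise is only forced to fix a $4$-cycle, which \emph{does} embed in $S^1$; the paper must invoke the second alternative of the Subgroup Theorem, producing an order-$3$ element $\psi$ with $\fix(\f)\cap\fix(\psi)$ containing an adjacent pair while $\fix(\f)\not\subseteq\fix(\psi)$. Your sketch would still need both repairs.
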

\begin{proof}
We begin by constructing an embedding of $K_{12m, 12m}$.  Let $B$ be a ball which is in the complement of the fixed point sets of the elements of $H$, such that $B$ is disjoint from all its images under the action of $H$ (see Figure \ref{F:S4}).  Pick $m$ points of $B$.  The union of the orbits of these $m$ points under the action of $H$ contains $24m$ points.  Embed $m$ vertices of $V$ at each of the $m$ points in $B$, and embed the other vertices of $V$ as the images of these points under the action of $F \leq H$; denote this set $V_0$.  This embeds the $12m$ points of $V$.  Now embed the vertices of $W$ as the other $12m$ points in the orbits; denote this set $W_0$.

This gives an embedding of the vertices of $K_{12m,12m}$ upon which $H$ acts faithfully.  We check the conditions of the Edge Embedding Lemma.  Since no vertices are fixed by any element of $H$, conditions (1), (2), (3) and (4) are trivially satisfied.  The only elements of $H$ which interchange two adjacent vertices are the involutions in $H - F$.  These are each rotations with non-empty fixed point set, and no other element of $H$ shares the same fixed point set (since the squares of the elements of $H$ order 4 are involutions in $F$).  Hence condition (5) is satisfied.  Therefore, there is an embedding $\Gamma_0$ with $S_4 \leq \TSG(\Gamma_0)$.

Now we construct an embedding of $K_{12m+4,12m+4}$.  Consider two concentric geodesic tetrahedra $\tau_1$ and $\tau_2$ such that $\tau$ is between $\tau_1$ and $\tau_2$, so that the elements of $F \leq H$ preserve $\tau_1$ and $\tau_2$ and the elements of $H - F$ interchange them.  Let $V_4$ denote the four corners of $\tau_1$ and $W_4$ denote the four corners of $\tau_2$.  Embed the vertices of $V$ as $V_0 \cup V_4$ and the vertices of $W$ as $W_0 \cup W_4$.  Then $H$ acts faithfully on this embedding of $V \cup W$, and each element of $H$ induces an automorphism of $K_{12m+4,12m+4}$.  Observe that each element of $H$ of order 3 fixes one vertex from each of $V$ and $W$, and no other element of $H$ fixes any vertices (see Figure~\ref{F:K4S4}).

\begin{figure} [h]
\includegraphics{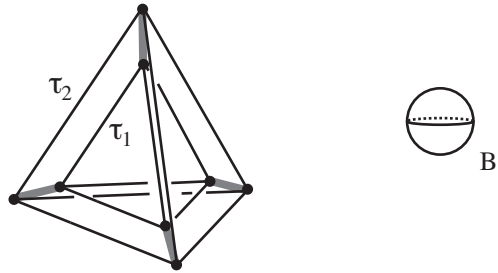}
\caption{The points of $V_4$ are the vertices of $\tau_1$ and the points of $W_4$ are the vertices of $\tau_2$.  The arcs required by hypothesis (2) are the gray arcs between corresponding vertices. }
\label{F:K4S4}
\end{figure}

If a pair of adjacent vertices is fixed by $h_1, h_2 \in H$, then $h_1$ and $h_2$ are elements of order 3 with the same axis of rotation, so condition (1)  of the Edge Embedding Lemma is satisfied.  Only one pair is fixed on each axis of rotation of order 3, and the vertices are connected by an arc which is disjoint from all other axes of rotation, so condition (2) is satisfied.  Any element which does not fix the arc pointwise sends it to a disjoint arc on a different axis of rotation, so condition (3) is satisfied.  The only elements of $H$ which interchange adjacent vertices are the involutions in $H - F$, which have no fixed vertices; so condition (4) is satisfied.  Finally, these involutions have fixed point sets homeomorphic to the circle, which are distinct from the fixed point sets of the other elements of $H$, so condition (5) is satisfied.  Therefore, there is an embedding $\Gamma_4$ with $S_4 \leq \TSG(\Gamma_4)$.

Now we apply the Subgroup Theorem to the embeddings $\Gamma_0$ and $\Gamma_4$.  First assume $m > 0$.  Let $v$ be a vertex of $V_0$, and let $w_1, w_2, w_3$ be vertices in $W_0$.  Let $e_i = \overline{v w_i}$.  Assume $\f \in \TSG(\Gamma_i)$ fixes $e_1$ pointwise, and fixes $\orb{e_i}_H$ setwise.  So $\f$ fixes $v$ and $w_1$.  Since $e_2$ is the only edge in $\orb{e_2}_H$ which is incident to $v$, $\f$ also fixes $e_2$ pointwise, and hence fixes $w_2$.  Similarly, $\f$ fixes $w_3$.  But then $\f$ fixes a subgraph isomorphic to $K_{1,3}$, which cannot be embedded in a circle.  So by the Subgroup Theorem, there is an embedding $\Gamma_i'$ with $S_4 \cong \TSG(\Gamma_i')$ (for $i = 0, 4$).

Finally, we consider $\Gamma_4$ when $m = 0$, so $n = 4$.  Let $v_1, v_2, v_3, v_4$ be the vertices of $V_4$, and let $w_1, w_2, w_3, w_4$ be the corresponding vertices of $W_4$, so that $v_i$ and $w_i$ lie on the same axis of rotation of order 3.  Now let $e_1 = \overline{v_1w_1}$ and $e_2 = \overline{v_1w_2}$.  Every edge of the embedded graph is in either $\orb{e_1}_H$ or $\orb{e_2}_H$.  Assume $\f \in \TSG(\Gamma_4)$ fixes $e_2$ pointwise and fixes each $\orb{e_i}_H$ setwise.  Then $\f$ fixes both $v_1$ and $w_2$.  Since $e_1$ is the only edge in $\orb{e_1}_H$ which is incident to $v_1$, $\f$ also fixes $e_1$, and hence $w_1$.  Moreover, $\overline{v_2w_2}$ is the only edge in $\orb{e_1}_H$ which is incident to $w_2$, so $\f$ will also fix this edge, and hence fix $v_2$.  So $\f$ fixes the 4-cycle $v_1w_1v_2w_2$ pointwise.  Let $\psi$ be the element of order 3 in $\TSG(\Gamma_4)$ which fixes $v_1$ and $w_1$ and permutes the other vertices cyclically.  Then $\fix(\f) \cap \fix(\psi)$ contains the adjacent pair $\{v_1, w_1\}$, but $\fix(\f)$ is not a subset of $\fix(\psi)$.  So by the Subgroup Theorem, there is an embedding $\Gamma_4'$ with $S_4 \cong \TSG(\Gamma_4')$.
\end{proof}

\bigskip

In the remaining cases, it will be easier to consider the values of $n$ modulo 24.

\begin{prop} \label{P:S4Cube}
Let $n \equiv 2, 6, 8, 14, 18 {\rm \ or\ } 20 \pmod{24}$, $n \geq 4$ and $n \neq 6$.  Then $K_{n,n}$ has an embedding $\Gamma$ with $S_4 \cong \TSG(\Gamma)$.
\end{prop}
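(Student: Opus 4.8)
The plan is to mirror the proof of Proposition \ref{P:S4Tetrahedron}, but using the cube group $G$ (all of whose elements are rotations) in place of $H$. First I would record the orbits of $\widehat{G} \cong S_4 \leq \so(4)$ acting on $S^3$: besides the two centers $s, s'$ (each fixed by all of $G$), a point has orbit of size $6$, $8$, $12$, or $24$ according as it lies on an axis of an order-$4$ rotation (equivalently of a face-involution in $E$), on an axis of an order-$3$ rotation, on an axis of an edge-involution in $G - E$, or on no axis at all. Because each axis is a geodesic circle through $s$ and $s'$, every special orbit-type occurs in a one-parameter family (indexed by distance from $s$), and the two points of such an orbit lying on a given axis-circle are symmetric about the diameter $\overline{ss'}$.

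Next, for each residue class I would place the $2n$ vertices as a $G$-invariant set with $|V| = |W| = n$. The device that makes the edge-embedding possible is to put \emph{both} centers in $V$: then $s$ and $s'$ sit at the two poles of every axis-circle, and any special $W$-orbit meeting that circle contributes a symmetric pair, so that the four points alternate $V, W, V, W$ around the circle. Concretely, I would arrange each axis-type to be either ``$V$-dominated'' (only $V$-vertices lie on it, so no edge is forced onto it) or ``$W$-dominated'' (the poles $s, s'$ together with a symmetric $W$-pair). The Automorphism Theorem guarantees compatibility with the forced counts $n_2^v, n_3^v, m_2^v, \ldots$: whenever an order-$i$ element fixes vertices of $W$ it fixes equally many of $V$ (case (3)), and those two $V$-vertices are taken to be the centers; whenever it fixes only $V$, the set $W$ avoids that axis-type entirely. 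Choosing the free orbit-parameters even (e.g.\ taking $n_3^v$ a multiple of $6$ rather than merely of $3$) and padding with generic $24$-orbits, one solves $|V| = |W| = n$ in each of the six residue classes, with no solution surviving at $n = 6$ (consistent with Lemma \ref{L:n=6}).

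I would then verify the Edge Embedding Lemma. Since $\{s,s'\} \subseteq V$ is $G$-invariant, no element of $G$ interchanges $V$ and $W$, so no adjacent pair is interchanged and conditions (4)--(5) are vacuous. The only edges fixed by a nontrivial rotation are those on a $W$-dominated circle joining a pole to an adjacent $W$-vertex; the alternating cyclic order makes the induced $K_{2,2}$ (a $4$-cycle) embed as the four arcs of the circle, giving conditions (2)--(3), and condition (1) holds because two distinct axes meet only at $s, s'$ while the edge $\overline{ss'}$ never occurs (both centers lie in $V$ and are non-adjacent). Finally, I would apply the Subgroup Theorem exactly as in Proposition \ref{P:S4Tetrahedron}: taking $e = \overline{vw}$ and letting $T$ be the union of the $G$-orbits of the edges at a single vertex $v$ chosen so that each orbit meets $v$ in one edge, any $\f \in \TSG(\Gamma)$ fixing $e$ and the orbits of $T$ must fix three neighbors of $v$ in distinct orbits, hence fixes a $K_{1,3}$ that cannot be embedded in $S^1$; this re-embeds $\Gamma$ so that $\TSG(\Gamma') \cong S_4$.

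The main obstacle is the edge-embedding step, and specifically the alternation requirement. The necessity results force $V$ and $W$ to share the face-axis circles precisely when $n \equiv 6 \pmod{12}$ (where $n_2^v = n_2^w = 2$), and two symmetric orbit-pairs on a common circle always nest rather than alternate, which would make the forced $K_{2,2}$ unembeddable. Resolving this is exactly what dictates placing the centers in $V$ (supplying the polar $V$-pair that interleaves with any $W$-pair), and it is also the mechanism that excludes $n = 6$. Checking that the system $|V| = |W| = n$ can be solved simultaneously with all of the parity and fixed-point constraints, uniformly across the residues $2, 6, 8, 14, 18, 20 \pmod{24}$, is the part demanding the most care.
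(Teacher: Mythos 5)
Your construction is essentially the paper's: the same cube group $G$ with $A_4$-subgroup $E$, the same orbit menu (sizes $1$, $6$, $8$, $12$, $24$), the same key device of placing both centers $s,s'$ in $V$ so that any axis circle carrying a $W$-pair reads $s,w,s',w'$ and the forced $K_{2,2}$ embeds as the four arcs of the circle, and the same finish via the Edge Embedding Lemma and the Subgroup Theorem with a fixed $K_{1,3}$. The counting you defer is carried out explicitly in the paper using nested cubes exactly as you anticipate: for instance $n \equiv 6 \pmod{24}$ is realized as $n = 24m+30$ with $V$ containing $s$, $s'$, the $12$ edge midpoints of $\sigma$ and the $16$ corners of $\sigma_1,\sigma_2$, while $W$ gets the $6$ face centers plus a free $24$-orbit; the other residues are similar, and once your orbit inventory is fixed this check is routine.

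The one step that fails as written is the Subgroup Theorem application in the base cases. You require a vertex $v$ such that each $G$-orbit of an edge at $v$ meets $v$ in exactly one edge; when $m = 0$ and $k \in \{8, 14, 18, 20\}$ there are no free-orbit vertices, every vertex lies on some axis, and no such $v$ exists. For example, with $n = 14$ a corner $w_1 \in W$ has stabilizer $\Z_3$, which cyclically permutes the three adjacent edge midpoints of $\sigma$, so $\orb{\overline{w_1 v_1}}_G$ meets $w_1$ in three edges; taking $v = s$ is worse, since every edge orbit at $s$ meets $s$ at least six times. The repair --- and what the paper actually does --- is to observe that orbit-uniqueness is only needed for the orbits \emph{other} than the edge $e$ that is pointwise fixed by hypothesis: take $e_1 = \overline{v_1 w_1}$, $e_2 = \overline{s w_1}$, $e_3 = \overline{s' w_1}$; since $s$ and $s'$ are fixed by all of $G$, the edges $e_2$ and $e_3$ are the unique edges of their orbits incident to $w_1$, so any $\f$ fixing $e_1$ pointwise and each orbit setwise fixes the $K_{1,3}$ spanned by $w_1, v_1, s, s'$, which cannot embed in $S^1$. (For $m > 0$, and for $k = 2, 6$ even when $m = 0$, a free-orbit vertex is present and your version works verbatim.) With that adjustment your proposal coincides with the paper's proof.
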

\begin{proof}
We will first describe how to embed the vertices of $K_{n,n}$ in each case.  Then we will apply the Edge Embedding Lemma to embed the edges of the graph, and finally use the Subgroup Theorem to reduce the topological symmetry group to $S_4$.

We begin by embedding the vertices of $K_{24m, 24m}$.  Let $B$ be a ball which is in the complement of the fixed point sets of the elements of $G$, such that $B$ is disjoint from all its images under the action of $G$ (see Figure \ref{F:S4Cube}).  Pick $m$ points of $B$.  The union of the orbits of these $m$ points under the action of $G$ contains $24m$ points; embed the vertices of $V$ as these $24m$ points, and denote them $V_0$.  Similarly, pick another, disjoint, set of $m$ points in $B$ and embed the vertices of $W$ as the orbits of these points, denoted $W_0$.  Then $G$ acts faithfully on the set $V_0 \cup W_0$.  Note that none of the points of $V_0 \cup W_0$ are fixed by any of the elements of $G$, and no element of $G$ interchanges a vertex of $V_0$ with one of $W_0$.  We now consider each value of $n \pmod{24}$ in turn.

\medskip
\noindent{\sc Case 1:} $n \equiv 2 \pmod{24}$

Since $n > 2$, we have $n = 24m + 26$.  We will embed the vertices of $K_{24m+26, 24m+26}$.  Let $V_2$ consist of $s$ and $s'$ (the centers of $\sigma$ and its complement), along with a point of $B$ (not in $V_0 \cup W_0$) and its orbit under $G$, for a total of 26 points.  Let $W_2$ be the 26 points at the corners, the centers of the faces, and the midpoints of the edges of $\sigma$.  We embed $V$ as $V_0 \cup V_2$ and $W$ as $W_0 \cup W_2$.  Observe that every element of $G$ fixes two vertices of $V$ and two of $W$, alternating along the axis of rotation.

\medskip
\noindent{\sc Case 2:} $n \equiv 6 \pmod{24}$

Since $n \neq 6$, we have $n = 24m+30$.  We will embed the vertices of $K_{24m+30, 24m+30}$.  Consider three nested geodesic cubes $\sigma$, $\sigma_1$ and $\sigma_2$, with $\sigma$ being the middle cube; so all three cubes have center $s$.  Let $V_6$ be the set of points containing $s$, $s'$, the 12 points at the midpoints of the edges of $\sigma$, and the 16 points at the corners of $\sigma_1$ and $\sigma_2$.  Embed $V$ as the set of points $V_0 \cup V_6$.  Let $W_6$ be the 6 points at the centers of the faces of $\sigma$, along with a point of $B$ (not in $V_0 \cup W_0$) and its orbit under $G$, and embed $W$ as $W_0 \cup W_6$ (see Figure \ref{F:K6S4}).  Observe that each element of $G$ of order 3 fixes 6 vertices of $V$; each element of order 4 fixes two vertices of $V$ and two of $W$, alternating on the axis of rotation; each involution of $E \leq G$ fixes two vertices of $V$ and two of $W$, alternating on the axis of rotation; and each involution of $G - E$ fixes four vertices of $V$ and none of $W$.

\begin{figure} [h]
\scalebox{.6}{\includegraphics{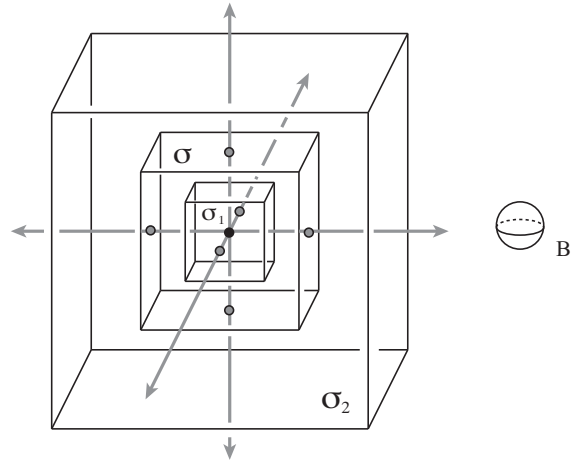}}
\caption{The points of $V_6$ are the center of the cube $s$ (the black dot), the center of its complement $s'$, the midpoints of the edges of $\sigma$ and the corners of $\sigma_1$ and $\sigma_2$.   The points of $W_6$ are the centers of the faces of $\sigma$ (the gray dots), along with 24 points in the orbit of $B$.  The arcs required by hypothesis (2) of the Edge Embedding Lemma are the gray arcs between the centers of the faces of $\sigma$ and the center of the cube and its complement. }
\label{F:K6S4}
\end{figure}

\medskip
\noindent{\sc Case 3:} $n \equiv 8 \pmod{24}$

We will embed the vertices of $K_{24m+8, 24m+8}$.  Let $V_8$ consist of $s$ and $s'$ together with the points at the centers of the faces of $\sigma$, and embed $V$ as $V_0 \cup V_8$.  Let $W_8$ be the 8 points at the corners of $\sigma$ and embed $W$ as $W_0 \cup W_8$ (see Figure \ref{F:K8S4}).  Observe that each element of $G$ of order 3 fixes two vertices of $V$ and two of $W$, alternating on the axis of rotation; each element of order 4 fixes four vertices of $V$ and none of $W$; each involution of $E \leq G$ fixes four vertices of $V$ and none of $W$; and each involution of $G - E$ fixes two vertices of $V$ and none of $W$. 

\begin{figure} [h]
\scalebox{.5}{\includegraphics{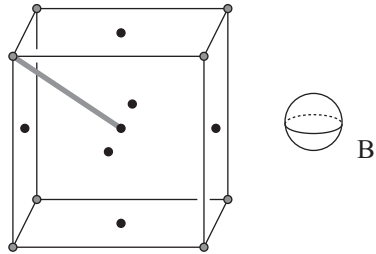}}
\caption{The points of $V_8$ are the black dots (and the point $s'$) and the points of $W_8$ are the gray dots.  The orbit of the gray line gives the arcs required by hypothesis (2) of the Edge Embedding Lemma. }
\label{F:K8S4}
\end{figure}

\medskip
\noindent{\sc Case 4:} $n \equiv 14 \pmod{24}$

We will embed the vertices of $K_{24m+14, 24m+14}$.  Let $V_{14}$ consist of $s$, $s'$, and the midpoints of the edges of $\sigma$; embed $V$ as $V_0 \cup V_{14}$.  Let $W_{14}$ be the corners and the centers of the faces of $\sigma$ and embed $W$ as $W_0 \cup W_{14}$ (see Figure \ref{F:K14S4}).  Observe that each element of $G$ of order 3 fixes two vertices of $V$ and two of $W$, alternating on the axis of rotation; each element of order 4 fixes two vertices of $V$ and two of $W$, alternating on the axis of rotation; each involution of $E \leq G$ fixes two vertices of $V$ and two of $W$, alternating on the axis of rotation; and each involution of $G - E$ fixes four vertices of $V$ and none of $W$.  

\begin{figure} [h]
\scalebox{.5}{\includegraphics{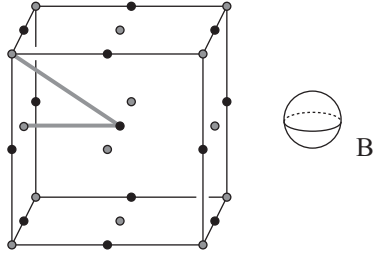}}
\caption{The points of $V_{14}$ are the black dots (and the point $s'$) and the points of $W_{14}$ are the gray dots.  The orbits of the gray lines give the arcs required by hypothesis (2) of the Edge Embedding Lemma. }
\label{F:K14S4}
\end{figure}

\medskip
\noindent{\sc Case 5:} $n \equiv 18 \pmod{24}$

We will embed the vertices of $K_{24m+18, 24m+18}$.  As for $K_{24m+30, 24m+30}$, consider three nested geodesic cubes $\sigma$, $\sigma_1$ and $\sigma_2$ with common center $s$, with $\sigma$ being the middle cube (see Figure \ref{F:K6S4}).  Let $V_{18}$ consist of $s$ and $s'$, together with the corners of $\sigma_1$ and $\sigma_2$.  Embed $V$ as $V_0 \cup V_{18}$.  Let $W_{18}$ be the midpoints of the edges and faces of $\sigma$; embed $W$ as $W_0 \cup W_{18}$.  Observe that each element of $G$ of order 3 fixes 6 vertices of $V$ and no vertices of $W$; each element of order 4 fixes two vertices of $V$ and two of $W$, alternating on the axis of rotation; each involution of $E \leq G$ fixes two vertices of $V$ and two of $W$, alternating on the axis of rotation; and each involution of $G - E$ fixes two vertices of $V$ and two of $W$, alternating on the axis of rotation.

\medskip
\noindent{\sc Case 6:} $n \equiv 20 \pmod{24}$

Finally, we embed the vertices $K_{24m+20, 24m+20}$.  Once again, consider three nested geodesic cubes $\sigma$, $\sigma_1$ and $\sigma_2$ centered at $s$, with $\sigma$ being the middle cube (see Figure \ref{F:K6S4}).  Let $V_{20}$ consist of $s$, $s'$, and the centers of the faces of all three cubes.  Embed $V$ as $V_0 \cup V_{20}$.  Let $W_{20}$ be the corners and the midpoints of the edges of $\sigma$; embed $W$ as $W_0 \cup W_{20}$.  Observe that each element of $G$ of order 3 fixes two vertices of $V$ and two of $W$, alternating on the axis of rotation; each element of order 4 fixes 8 vertices of $V$ and none of $W$; each involution of $E \leq G$ fixes 8 vertices of $V$ and none of $W$; and each involution of $G - E$ fixes two vertices of $V$ and two of $W$, alternating on the axis of rotation. 

\medskip
Now we apply the Edge Embedding Lemma to these embeddings.  In each case, for $k = 2, 6, 8, 14, 18, 20$, the arcs along the axes of rotation from the points of $W_k$ to the center of the cube and its complement are the arcs required by hypothesis (2) of the Edge Embedding Lemma; the remaining hypotheses are easy to check.  In particular, in every case the action of $G$ fixes $V$ and $W$ setwise, so hypotheses (4) and (5) are satisfied trivially.  Therefore, there is an embedding $\Gamma_k$ of the corresponding complete bipartite graph with $S_4 \leq \TSG(\Gamma_k)$.

\medskip
Our last step is to apply the Subgroup Theorem to each of our embeddings $\Gamma_k$.  In each case, if $m > 0$, then there is a vertex $v \in V_0$ embedded in $B$, which is not fixed by any element of $G$.  Let $w_1, w_2, w_3$ be any vertices in $W$.  Let $e_i = \overline{vw_i}$.  Assume $\f \in \TSG(\Gamma_k)$ fixes $e_1$ pointwise, and fixes $\orb{e_i}_G$ setwise.  So $\f$ fixes $v$ and $w_1$.  Since no element of $G$ fixes $v$ or interchanges $V$ and $W$, $e_2$ is the only edge in $\orb{e_2}_G$ which is incident to $v$.  So $\f$ also fixes $e_2$ pointwise, and hence fixes $w_2$.  Similarly, $\f$ fixes $w_3$.  But then $\f$ fixes a subgraph isomorphic to $K_{1,3}$, which cannot be embedded in a circle.  So by the Subgroup Theorem, there is an embedding $\Gamma_k'$ with $S_4 \cong \TSG(\Gamma_k')$.

So we may assume $m = 0$.  When $k = 2$ or $6$, there is still a vertex of $V_k$ or $W_k$ which is embedded in $B$, and we can use the same argument as the previous paragraph.  When $k = 8, 14, 18, 20$, two vertices of $V_k$ are embedded at $s$ and $s'$, which are fixed by every element of $G$.  Let $v_1$ be one of the other vertices of $V$, and $w_1$ be one of the vertices of $W$.  Let $e_1 = \overline{v_1w_1}$, $e_2 = \overline{sw_1}$ and $e_3 = \overline{s'w_1}$.  Assume $\f \in \TSG(\Gamma_k)$ fixes $e_1$ pointwise and fixes each $\orb{e_i}_G$ setwise.  Then $\f$ fixes both $v_1$ and $w_1$.  Since $s$ and $s'$ are fixed by $G$, $e_2$ and $e_3$ are the only edges in their respective orbits which are adjacent to $w_1$; so $\f$ must also fix these two edges pointwise.  Therefore $\f$ fixes a subgraph isomorphic to $K_{3,1}$, which cannot be embedded in $S^1$.  So by the Subgroup Theorem, there is an embedding $\Gamma_k'$ with $S_4 \cong \TSG(\Gamma_k')$.
\end{proof}

\medskip

Combining Theorem \ref{T:A4necessity}, Lemma \ref{L:n=6}, Proposition \ref{P:S4Tetrahedron} and Proposition \ref{P:S4Cube}, we obtain:

\begin{S4}
$K_{n,n}$ has an embedding $\Gamma$ with $S_4 \cong \TSG(\Gamma)$ if and only if $n \equiv 0, 2, 4, 6, 8 \pmod{12}$, $n \geq 4$, and $n \neq 6$.
\end{S4}

\section{Embeddings with $\TSG \cong A_4$}

As in the previous section, let $F$ be the group of symmetries of a solid geodesic tetrahedron $\tau$ in $S^3$, so $F \cong A_4$.  We will first show that $K_{6,6}$ has an embedding with topological symmetry group isomorphic to $A_4$.

\begin{prop} \label{P:n=6}
$K_{6,6}$ has an embedding $\Gamma$ with $A_4 \cong \TSG(\Gamma)$.
\end{prop}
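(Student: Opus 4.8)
The plan is to follow the same template used throughout this section: first embed the vertices of $K_{6,6}$ so that the group $F \cong A_4$ of symmetries of the solid tetrahedron $\tau$ acts faithfully on them, then apply the Edge Embedding Lemma to embed the edges $F$-invariantly, and finally apply the Subgroup Theorem to reduce $\TSG$ down to exactly $A_4$. The key design constraint comes from the necessity proof: for $n=6$ we must have $n_2^v = n_2^w = 2$ and $n_3^v = 3k$, $n_3^w = 0$, so each involution of $F$ must fix two vertices of $V$ and two of $W$ (alternating along its axis), while each order-3 element fixes three vertices of $V$ and none of $W$ (or vice versa). I would therefore place $t$ and $t'$ (the center of $\tau$ and of its complement) as two vertices of $V$, put the two remaining "free" vertices of $V$ at the midpoints of an $F$-orbit structure that the order-3 axes pass through, and arrange $W$ so that the three involution axes each carry two $V$-vertices and two $W$-vertices. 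Concretely, the four order-3 axes of $\tau$ each contain one corner of $\tau$; placing vertices along these axes and along the three involution axes (which pass through midpoints of opposite edges of $\tau$) should realize the required fixed-point counts.

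Once the vertices are placed so that $F$ acts faithfully and induces graph automorphisms by Fact~\ref{L:automorphisms}, I would verify the five hypotheses of the Edge Embedding Lemma. Conditions (4) and (5) concern elements that interchange adjacent vertices; since every element of $F$ fixes $V$ and $W$ setwise (Lemma~\ref{L:f(V)=V}), no element interchanges an adjacent pair, so (4) and (5) hold vacuously. The substantive work is conditions (1)--(3): I must choose, for each pair of adjacent fixed vertices lying on a common axis, an arc $A_{vw}$ along that axis (a fixed-point circle of a rotation) whose interior avoids all other vertices and all other chosen arcs, and check that any element setwise fixing the pair or an interior point maps the arc to itself. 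This is the familiar bookkeeping with the gray arcs drawn in the figures of Propositions~\ref{P:S4Tetrahedron} and \ref{P:S4Cube}; here the arcs run along the order-2 and order-3 axes between consecutive fixed vertices, and the alternation of $V$- and $W$-vertices along each axis is exactly what guarantees the arcs connect adjacent (hence differently-colored) vertices.

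For the final reduction, I would apply the Subgroup Theorem with $H = F \cong A_4$. Because $t$ and $t'$ are fixed by all of $F$, I can use the argument pattern from the $m=0$ cases of Proposition~\ref{P:S4Cube}: pick a vertex $v_1 \in V$ and $w_1 \in W$ and set $e_1 = \overline{v_1 w_1}$, $e_2 = \overline{t\,w_1}$, $e_3 = \overline{t'w_1}$; any $\f \in \TSG(\Gamma)$ fixing $e_1$ pointwise and each $\orb{e_i}_F$ setwise must fix $t$, $t'$, and $w_1$, and since $e_2, e_3$ are the only edges in their orbits incident to $w_1$, it fixes a $K_{1,3}$ that cannot embed in $S^1$, giving condition (1) of the Subgroup Theorem. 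I expect the main obstacle to be the vertex placement itself: unlike the clean $24m$-orbit constructions, $K_{6,6}$ is small enough that the six vertices of each part must be distributed delicately among the fixed points and midpoints of $\tau$'s axes to hit the required counts $(n_2^v, n_2^w, n_3^v, n_3^w) = (2,2,3,0)$ simultaneously, and I would need to confirm that no two distinct axes are forced to share three collinear vertices (which would collapse two rotations into one) — the same kind of collinearity obstruction that ruled out $S_4$ in Lemma~\ref{L:n=6}, but which here should be avoidable precisely because $A_4$ imposes fewer axes than $S_4$.
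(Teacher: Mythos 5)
Your proposal is correct and takes essentially the same route as the paper's proof: embed $V$ as $t$, $t'$ together with the four corners of $\tau$ and $W$ as the six edge midpoints (realizing $(n_2^v,n_2^w,n_3^v,n_3^w)=(2,2,3,0)$), apply the Edge Embedding Lemma with arcs along the involution axes, then apply the Subgroup Theorem using the star $e_1=\overline{v_1w_1}$, $e_2=\overline{t\,w_1}$, $e_3=\overline{t'w_1}$ to force a fixed $K_{1,3}$. (Your phrase ``the two remaining free vertices of $V$'' is a small miscount --- four vertices of $V$ remain after placing $t$ and $t'$ --- but your concrete placement at the corners of $\tau$ on the order-3 axes is exactly the paper's configuration, so nothing is affected.)
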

\begin{proof}
Let $V_6$ be the points $t$ and $t'$ at the center of $\tau$ and the center of its complement, together with the corners of $\tau$.  Let $W_6$ be the 6 points at the midpoints of the edges of $\tau$.  Embed $V$ as $V_6$ and $W$ as $W_6$.  Then $F$ acts faithfully on the vertices of $V$ and $W$.  Each element of order 3 fixes three vertices of $V$ and none of $W$, and each element of order 2 fixes two vertices of $V$ and two of $W$.  The arcs along the axes of rotation from the points of $W_{6}$ to the center of $\tau$ and its complement are the arcs required by hypothesis (2) of the Edge Embedding Lemma; the remaining hypotheses are easy to check.  Therefore, there is an embedding $\Gamma$ of $K_{6,6}$ with $A_4 \leq \TSG(\Gamma)$.

Now we apply the Subgroup Theorem.  Let $v$ be a vertex of $V$ embedded at one of the corners of $\tau$, and let $w$ be an embedded vertex of $W$.  Let $e_1 = \overline{wv}$, $e_2 = \overline{wt}$ and $e_3 = \overline{wt'}$.  Assume $\f \in \TSG(\Gamma)$ fixes $e_1$ pointwise and fixes each $\orb{e_i}_F$ setwise.  Then $\f$ fixes $w$ and $v$.  Since every element of $G$ fixes $t$ and $t'$, $e_2$ and $e_3$ are the only edges in their orbits which are also incident to $w$, so $\f$ also fixes both of these edges, and hence fixes $t$ and $t'$.  But $K_{3,1}$ cannot be embedded in a circle.  So by the Subgroup Theorem, there is an embedding $\Gamma'$ with $A_4 \cong \TSG(\Gamma')$.
\end{proof}

\medskip

For the remaining cases, we will apply the following corollary of the Subgroup Theorem to the embeddings described in Propositions \ref{P:S4Tetrahedron} and \ref{P:S4Cube}.

\begin{subgroupcor}\cite{fmn1} 
Let $\Gamma$ be an embedding of a 3-connected graph in $S^3$.  Suppose that $\Gamma$ contains an edge $e$ which is not pointwise fixed by any non-trivial element of $\TSG(\Gamma)$.  Then for every $H\leq \TSG(\Gamma)$, there is an embedding $\Gamma'$ of $\Gamma$ with $H \cong \TSG(\Gamma')$.
\end{subgroupcor}

\begin{prop} \label{P:A4embeddings}
Let $n \equiv 0, 2, 4, 6 {\rm \ or\ } 8 \pmod{12}$, $n \geq 4$.  Then $K_{n,n}$ has an embedding $\Gamma$ with $A_4 \cong \TSG(\Gamma)$.
\end{prop}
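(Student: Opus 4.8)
The plan is to obtain the $A_4$ embeddings directly from the $S_4$ embeddings already built, rather than constructing them from scratch. Since $A_4 \leq S_4$, the Subgroup Corollary will convert any embedding with $\TSG \cong S_4$ into one with $\TSG \cong A_4$, provided we can locate an edge that is not pointwise fixed by any non-trivial symmetry. So the proof divides into a trivial indexing step and a verification of that one hypothesis.

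First, the value $n = 6$ is handled separately by Proposition \ref{P:n=6}. For every other admissible $n$, that is $n \equiv 0, 2, 4, 6, 8 \pmod{12}$ with $n \geq 4$ and $n \neq 6$, Propositions \ref{P:S4Tetrahedron} and \ref{P:S4Cube} together furnish an embedding $\Gamma'$ of $K_{n,n}$ with $\TSG(\Gamma') \cong S_4$: reducing modulo $24$, Proposition \ref{P:S4Tetrahedron} covers the residues $0, 4, 12, 16$ and Proposition \ref{P:S4Cube} covers $2, 6, 8, 14, 18, 20$, which exhausts the list. I would then apply the Subgroup Corollary to $\Gamma'$ with $H$ the $A_4$ subgroup of $S_4$.

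The content of the argument is thus the free-edge hypothesis. Every non-trivial element of the cube group $G$ and the tetrahedron group $H$ is a rotation whose fixed-point set is a single geodesic circle, and any two distinct such axes meet only at the central pair ($\{s,s'\}$ for $G$, $\{t,t'\}$ for $H$). Consequently an edge $\overline{vw}$ can be pointwise fixed by a non-trivial element only if both endpoints lie on one common axis; it suffices to choose an adjacent pair $v \in V$, $w \in W$ lying on distinct axes, neither endpoint being a center or its antipode. Whenever the construction uses an orbit of the auxiliary ball $B$, this is automatic: such an orbit contains a vertex fixed by no non-trivial element, so any incident edge works. This disposes at once of all $n \equiv 0 \pmod{12}$, of $n \equiv 2, 6 \pmod{24}$, and of every case with $m > 0$.

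The only cases left to inspect by hand are the finitely many ``small'' embeddings with $m = 0$ that use no orbit of $B$, namely $n = 4, 8, 14, 18, 20$, where each vertex sits at a special geometric point. Here I would name a free edge explicitly: for $n = 4$, the edge from a corner $v_1$ of $\tau_1$ to a non-corresponding corner $w_2$ of $\tau_2$, since the order-$3$ axis through $v_1$ misses $w_2$; and for $n = 8, 14, 18, 20$, an edge joining a face-center, edge-midpoint, or $\sigma_1$-corner vertex to a corner or face-center vertex, whose stabilizing rotations have different axes. In each instance no non-trivial rotation fixes both endpoints, so the Subgroup Corollary yields an embedding with $\TSG \cong A_4$. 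The main (and only real) obstacle is this finite case-check; everything else is a formal invocation of the established results.
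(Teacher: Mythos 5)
Your proposal is correct and is essentially the paper's own proof: treat $n=6$ separately via Proposition \ref{P:n=6}, then apply the Subgroup Corollary with $H \cong A_4$ to the $S_4$ embeddings $\Gamma_k'$ of Propositions \ref{P:S4Tetrahedron} and \ref{P:S4Cube}, exhibiting in each an edge pointwise fixed by no non-trivial element of $\TSG(\Gamma_k')$ --- exactly the edges the paper records in its table (any edge at a $B$-orbit vertex when one exists, and your explicit choices in the $m=0$ cases $n = 4, 8, 14, 18, 20$ match the paper's). One harmless slip: not every non-trivial element of the tetrahedral group $H$ is a rotation with axis through $\{t,t'\}$ --- the order-4 elements are glide rotations with empty fixed-point set, and the involutions in $H-F$ swap $t$ and $t'$ --- but since those elements fix no vertices of your embeddings, the free-edge verifications go through unchanged.
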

\begin{proof}
Consider the embeddings $\Gamma_0', \Gamma_2', \Gamma_4', \Gamma_6', \Gamma_8', \Gamma_{14}', \Gamma_{18}', \Gamma_{20}'$ from Propositions \ref{P:S4Tetrahedron} and \ref{P:S4Cube}.  Each of these embeddings has a topological symmetry group isomorphic to $S_4$ (induced by the action of the group $H$ on its vertices for $\Gamma_0'$ and $\Gamma_4'$, and by the action of the group $G$ on its vertices for the other graphs).  Moreover, for each $k$, $\Gamma_k'$ contains an edge which is not fixed by any non-trivial element of $\TSG(\Gamma_k')$.  The edge for each graph is listed in the table below.  

\begin{center}
\renewcommand{\arraystretch}{1.5}
\begin{tabular}{|c|c|}
	\hline
	{\bf Graph} & {\bf Edge not fixed by $\TSG(\Gamma_k')$} \\ \hline
	\rule{0ex}{2ex}$\Gamma_0'$ & \parbox[b]{3in}{\rule{0ex}{2ex}any edge of the graph} \\ \hline
	\rule{0ex}{2ex}$\Gamma_2'$ & \parbox[b]{3in}{\rule{0ex}{2ex}any edge adjacent to a vertex of $V_2$ other than the center of the cube or its complement}  \\ \hline
	$\Gamma_4'$ & \parbox[b]{3in}{\rule{0ex}{2ex}an edge from a corner of $\tau_1$ to a corner of $\tau_2$ which is not on an axis of rotation of order 3} \\ \hline
	$\Gamma_6'$ & \parbox[b]{3in}{\rule{0ex}{2ex}any edge adjacent to a vertex of $W_6$ not on the center of a face of the cube} \\ \hline
	$\Gamma_8'$ & \parbox[b]{3in}{\rule{0ex}{2ex}an edge from the center of a face of $\sigma$ to an adjacent corner of $\sigma$} \\ \hline
	$\Gamma_{14}'$ & \parbox[b]{3in}{\rule{0ex}{2ex}an edge from the midpoint of an edge of $\sigma$ to an adjacent corner of $\sigma$} \\ \hline
	$\Gamma_{18}'$ & \parbox[b]{3in}{\rule{0ex}{2ex}an edge from a corner of $\sigma_2$ to a midpoint of an edge of $\sigma$} \\ \hline
	$\Gamma_{20}'$ & \parbox[b]{3in}{\rule{0ex}{2ex}an edge from the center of a face of $\sigma$ to an adjacent corner of $\sigma$} \\ \hline
\end{tabular}
\end{center}

So by the Subgroup Corollary there is an re-embedding $\Gamma_k''$ of $\Gamma_k'$ with $A_4 \cong \TSG(\Gamma_k'')$.  This proves the result for $n \equiv 0, 2, 4, 6, 8 \pmod{12}$, $n \geq 4$, $n \neq 6$.  The final case, when $n = 6$, is given by Proposition \ref{P:n=6}.
\end{proof}

\medskip

Combining Theorem \ref{T:A4necessity} and Proposition \ref{P:A4embeddings}, we obtain:

\begin{A4}
$K_{n,n}$ has an embedding $\Gamma$ with $A_4 \cong \TSG(\Gamma)$ if and only if $n \equiv 0, 2, 4, 6 {\rm \ or\ } 8 \pmod{12}$ and $n \geq 4$.
\end{A4}

\section{Embeddings with $\TSG \cong A_5$}

We now turn to the last of the polyhedral groups, $A_5$ (the group of symmetries of a regular dodecahedron).  By Theorem \ref{T:A5necessity} and Lemma \ref{L:n>30}, we know that if $K_{n,n}$ has an embedding $\Gamma$ with $\TSG(\Gamma) \cong A_5$, then $n \equiv 0, 2, 12, 20, 30, 32, 42 {\rm \ or\ } 50 \pmod{60}$, and $n > 30$.  We will construct embeddings for each of these possibilities.  Let $D$ be the group of symmetries of a regular solid dodecahedron $\Delta$ in $S^3$; then $D \cong A_5$.  Let $d$ denote the center of $\Delta$ and $d'$ denote the center of its complement; every element of $D$ fixes $d$ and $d'$.

\begin{prop} \label{P:A5Dodecahedron}
If $n \equiv 0, 2, 12, 20, 30, 32, 42 {\rm \ or\ } 50 \pmod{60}$, and $n > 30$, then $K_{n,n}$ has an embedding $\Gamma$ with $A_5 \cong \TSG(\Gamma)$.
\end{prop}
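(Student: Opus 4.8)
The plan is to mirror the construction of Proposition \ref{P:S4Cube}, replacing the cube group by $D$, the group of symmetries of the solid dodecahedron $\Delta$. First I would embed the vertices of $K_{n,n}$ as a union of $D$-orbits. By Fact \ref{F:A5orbits} the available orbit sizes are $60$ (generic points of a ball disjoint from all the axes, as with the set $V_0$ in earlier proofs), $30$, $20$, $12$ (points on the order-$2$, order-$3$, order-$5$ axes respectively, of which there is a nested one-parameter family along each axis), and $1$ (the two centers $d, d'$). The numerical heart of the matter is that the subset sums of $\{30, 20, 12\}$ are congruent mod $60$ to precisely $0, 30, 20, 12, 50, 42, 32$, and $62 \equiv 2$ — exactly the eight residues permitted by Theorem \ref{T:A5necessity}. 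So for each residue I would choose the corresponding subset of structured orbits to place in $W$ (so that $|W| \equiv n \pmod{60}$), place $d, d' \in V$, and fill out $V$ with generic $60$-orbits together with structured orbits placed on the axes \emph{not} used by $W$, so that $|V| = |W| = n$; the case $n \equiv 0$ is the degenerate one in which both $V$ and $W$ are unions of generic $60$-orbits. By construction $D$ acts faithfully and fixes $V$ and $W$ setwise, as required by Lemma \ref{L:f(V)=V}.

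Next I would apply the Edge Embedding Lemma. Because no element of $D$ interchanges $V$ and $W$, no adjacent pair is interchanged and hypotheses (4) and (5) hold vacuously. The only adjacent pairs lying in a common fixed-point circle are pairs on a single axis; with $d, d' \in V$ and a single opposite pair $w, w'$ of $W$-vertices on such an axis, the cyclic order around that circle is the alternating $d, w, d', w'$ (the analogue of the ``alternating along the axis'' observations in Proposition \ref{P:S4Cube}), so its four arcs furnish the arcs demanded by hypothesis (2) with disjoint interiors — precisely the device of the arcs ``from the points of $W_k$ to the center of the cube and its complement.'' Hypotheses (1) and (3) then hold because two distinct axes meet only in $\{d, d'\}$, so any nontrivial element fixing two adjacent vertices of a common axis shares that axis. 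This yields $\Gamma_0$ with $A_5 \cong D \leq \TSG(\Gamma_0)$.

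Finally I would cut the group down to $A_5$. Whenever the embedding contains a generic $60$-orbit, an edge incident to a generic vertex is fixed by no nontrivial element, so the Subgroup Corollary produces a re-embedding with $\TSG \cong A_5$. The only cases without such a vertex are the smallest members $n = 32, 42, 50$ of their classes, whose vertex sets are purely structured; for these I would invoke the Subgroup Theorem with $H = D$, exploiting that $d, d' \in V$ are fixed by all of $D$. For an edge $e_1 = \overline{v w_1}$ and the edges $e_2 = \overline{d w_1}$, $e_3 = \overline{d' w_1}$, any $\f \in \TSG(\Gamma_0)$ fixing $e_1$ pointwise and each $\orb{e_i}_D$ setwise must fix $w_1$, and since $e_2, e_3$ are the unique edges of $\orb{e_2}_D, \orb{e_3}_D$ incident to $w_1$, it must fix $d$ and $d'$ as well; thus $\f$ pointwise fixes a $K_{1,3}$, which cannot embed in $S^1$, so condition (1) of the Subgroup Theorem applies.

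The main obstacle I expect is the bookkeeping in the vertex placement: for each of the eight residues one must exhibit an explicit assignment of orbits with $|V| = |W| = n$ in which each axis carries structured vertices of only one of $V, W$ (apart from the centers), so that the alternating pattern, and hence hypothesis (2), survives, and one must check that nesting several same-color structured orbits on one axis creates no new $V$--$W$ adjacent pairs demanding extra arcs. Confirming the alternating order $d, w, d', w'$ on every axis and rechecking the Edge Embedding hypotheses in each case is the most delicate and lengthy part; by contrast, the Subgroup Theorem step reduces, in every case, to the single $K_{1,3}$ argument above.
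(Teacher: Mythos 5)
Your construction is, in all essentials, the paper's own proof. The paper realizes each of the eight residues exactly as you propose: $d, d' \in V$; the structured orbits of sizes $12$, $20$, $30$ (face centers, corners, edge midpoints) distributed to $W$ so that the subset sum matches the residue; $V$ padded out with nested copies of the remaining orbit types on concentric dodecahedra together with generic $60$-orbits (for instance, for $n \equiv 12 \pmod{60}$ the paper takes $|V| = 2 + 50 + 20 = 72$ using $\Delta$ and a concentric $\Delta'$, which is your $2 + 20\cdot 2 + 30$ bookkeeping; the minima $72$, $80$, $90$ for residues $12$, $20$, $30$ are consistent with $n > 30$). The Edge Embedding Lemma is then applied with precisely your alternating $d, w, d', w'$ arcs along the axes, and the paper explicitly defers the verification and the cutting-down step to the arguments of Proposition \ref{P:S4Cube}. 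Your second Subgroup Theorem branch (edges $\overline{vw_1}$, $\overline{dw_1}$, $\overline{d'w_1}$, forcing any admissible $\f$ to pointwise fix a $K_{1,3}$) is verbatim the paper's final step in Proposition \ref{P:S4Cube}.

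The one genuine misstep is your first branch: the Subgroup Corollary is not applicable as you invoke it. Its hypothesis demands an edge fixed pointwise by no nontrivial element of $\TSG(\Gamma_0)$ --- the \emph{full} topological symmetry group of the initial embedding --- whereas your construction only guarantees that no nontrivial element of $D$ fixes an edge at a generic vertex. At this stage you know only $D \leq \TSG(\Gamma_0)$, and nothing rules out additional automorphisms in $\TSG(\Gamma_0)$, induced by homeomorphisms unrelated to the isometric action, that fix such an edge pointwise. This is exactly why the Subgroup Theorem carries the checkable extra hypothesis that $\f$ setwise fixes the $H$-orbits of the edges of $T$, and why the paper invokes the Corollary only in Proposition \ref{P:A4embeddings}, after the topological symmetry group of $\Gamma_k'$ has already been identified as $S_4$. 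The repair is immediate and is what the paper does in the generic-orbit cases: take $e_i = \overline{vw_i}$ for $i = 1, 2, 3$ with $v$ generic; since no nontrivial element of $D$ fixes $v$ and $D$ preserves $V$ and $W$ setwise, each $e_i$ is the only edge of $\orb{e_i}_D$ incident to $v$, so any $\f \in \TSG(\Gamma_0)$ fixing $e_1$ pointwise and each orbit setwise pointwise fixes a $K_{1,3}$, and condition (1) of the Subgroup Theorem applies. With that substitution your argument is complete, modulo writing out the per-residue orbit assignments you flag as bookkeeping, all of which do check out against the paper's explicit case-by-case lists.
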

\begin{proof}
We will embed the vertices of $K_{n,n}$ for each case, and then apply the Edge Embedding Lemma and the Subgroup Theorem as in Propositions \ref{P:S4Tetrahedron} and \ref{P:S4Cube}.

\medskip
\noindent{\sc Case 1:} $n \equiv 0 \pmod{60}$

We will first embed the vertices of $K_{60m,60m}$.  Let $B$ be a ball which is in the complement of the fixed point sets of the elements of $D$, such that $B$ is disjoint from all its images under the action of $D$.  Pick $m$ points of $B$.  The union of the orbits of these $m$ points under the action of $D$ contains $60m$ points; denote this set of points by $V_0$ and embed the vertices of $V$ as $V_0$.  Similarly, pick another, disjoint, set of $m$ points in $B$ and embed the vertices of $W$ as the orbits of these points, denoted $W_0$.

\medskip
\noindent{\sc Case 2:} $n \equiv 2 \pmod{60}$

Since $n > 30$, this means $n = 60m + 62$.  We will embed the vertices of $K_{60m+62, 60m+62}$.  Let $V_2$ consist of $d$, $d'$, and a point of $B$ (not in $V_0 \cup W_0$) and its orbit under $D$.  Embed $V$ as $V_0 \cup V_2$.  Let $W_2$ consist of the 62 points at the corners, the centers of the faces, and the midpoints of the edges of  $\Delta$ and embed $W$ as $W_0 \cup W_2$.  Then $D$ acts faithfully on the embedded vertices.  Observe that every element of $D$ fixes two vertices of $V$ (namely, $d$ and $d'$) and two of $W$, alternating along the axis of rotation.

\medskip
\noindent{\sc Case 3:} $n \equiv 12 \pmod{60}$

Since $n > 30$, if $n \equiv 12 \pmod{60}$ then $n = 60m + 72$.  We will embed the vertices of $K_{60m+72, 60m+72}$.  Consider $\Delta$ and $\Delta'$, where $\Delta'$ is a second solid geodesic dodecahedron which is concentric with $\Delta$ (so $d$ is also the center of $\Delta'$).  Let $V_{12}$ consist of $d$, $d'$, the 50 points at the corners and midpoints of the edges of $\Delta$ and the 20 points at the corners of $\Delta'$.  Embed $V$ as $V_0 \cup V_{12}$.  Let $W_{12}$ consist of the 12 points at the centers of the faces of  $\Delta$ along with a point of $B$ (not in $V_0 \cup W_0$) and its orbit under $D$, and embed $W$ as $W_0 \cup W_{12}$.  Observe that each element of $G$ of order 5 fixes two vertices of $V$ and two of $W$, alternating on the axis of rotation; each element of order 3 fixes four vertices of $V$ and none of $W$; and each involution fixes four vertices of $V$ and none of $W$.

\medskip
\noindent{\sc Case 4:} $n \equiv 20 \pmod{60}$

Again, since $n > 30$, if $n \equiv 20 \pmod{60}$ then $n = 60m + 80$.  We will embed the vertices of $K_{60m+80, 60m+80}$.  Let $\Delta_1, \Delta_2, \Delta_3, \Delta_4$ be four concentric regular solid dodecahedra (all with center $d$ and group of symmetries $D$).  Let $V_{20}$ consist of $d$, $d'$, the 48 points at the centers of the faces of the $\Delta_i$, and the 30 points at the midpoints of the edges of $\Delta_1$.  Embed $V$ as $V_0 \cup V_{20}$.  Let $W_{20}$ be the 20 points at the corners of $\Delta_1$ along with a point of $B$ (not in $V_0 \cup W_0$) and its orbit under $D$, and embed $W$ as $W_0 \cup W_{20}$.  Observe that each element of $D$ of order 5 fixes 10 vertices of $V$ and none of $W$; each element of order 3 fixes two vertices of $V$ and two of $W$, alternating on the axis of rotation; and each involution fixes four vertices of $V$ and none of $W$.

\medskip
\noindent{\sc Case 5:} $n \equiv 30 \pmod{60}$

Once again, since $n > 30$, if $n \equiv 30 \pmod{60}$ then $n = 60m + 90$.  We will embed the vertices of $K_{60m+90, 60m+90}$.  As in the last paragraph, let $\Delta_1, \Delta_2, \Delta_3, \Delta_4$ be four concentric regular solid dodecahedra with center $d$.  Let $V_{20}$ consist of $d$, $d'$, the 48 points at the centers of the faces of the $\Delta_i$, and the 40 points at the corners of $\Delta_1$ and $\Delta_2$.  Embed $V$ as $V_0 \cup V_{30}$.  Let $W_{30}$ be the 30 points at the midpoints of the edges of $\Delta_1$ along with a point of $B$ (not in $V_0 \cup W_0$) and its orbit under $D$, and embed $W$ as $W_0 \cup W_{30}$.  Observe that each element of $D$ of order 5 fixes 10 vertices of $V$ and none of $W$; each element of order 3 fixes 6 vertices of $V$ and none of $W$; and each involution fixes two vertices of $V$ and two of $W$, alternating on the axis of rotation.

\medskip
\noindent{\sc Case 6:} $n \equiv 32 \pmod{60}$

We will embed the vertices of $K_{60m+32, 60m+32}$.  Let $V_{32}$ consist of $d$, $d'$, and the 30 points at the midpoints of the edges of $\Delta$.  Embed $V$ as $V_0 \cup V_{32}$.  Let $W_{32}$ be the 32 points at the corners and the centers of the faces of $\Delta$ and embed $W$ as $W_0 \cup W_{32}$.  Observe that each element of $D$ of order 5 fixes two vertices of $V$ and two of $W$, alternating on the axis of rotation; each element of order 3 fixes two vertices of $V$ and two of $W$, alternating on the axis of rotation; and each involution fixes four vertices of $V$ and none of $W$.

\medskip
\noindent{\sc Case 7:} $n \equiv 42 \pmod{60}$

We will embed the vertices of $K_{60m+42, 60m+42}$.  Let $\Delta_1$ and $\Delta_2$ be two concentric regular solid dodecahedra with common center $d$.  Let $V_{42}$ consist of $d$, $d'$, and the 40 points at the corners of $\Delta_1$ and $\Delta_2$.  Embed $V$ as $V_0 \cup V_{42}$.  Let $W_{42}$ be the 42 points at the centers of the faces and the midpoints of the edges of $\Delta_1$ and embed $W$ as $W_0 \cup W_{42}$.  Observe that each element of $D$ of order 5 fixes two vertices of $V$ and two of $W$, alternating on the axis of rotation; each element of order 3 fixes 6 vertices of $V$ and none of $W$; and each involution fixes two vertices of $V$ and two of $W$, alternating on the axis of rotation.

\medskip
\noindent{\sc Case 8:} $n \equiv 50 \pmod{60}$

Finally we will embed the vertices of $K_{60m+50, 60m+50}$.  Let $\Delta_1, \Delta_2, \Delta_3, \Delta_4$ be four concentric regular solid dodecahedra with common center $d$.  Let $V_{50}$ consist of $d$, $d'$, and the 48 points at the centers of the faces of the $\Delta_i$.  Embed $V$ as $V_0 \cup V_{50}$.  Let $W_{50}$ be the 50 points at the corners and the midpoints of the edges of $\Delta_1$ and embed $W$ as $W_0 \cup W_{50}$.  Observe that each element of $D$ of order 5 fixes 10 vertices of $V$ and none of $W$; each element of order 3 fixes two vertices of $V$ and two of $W$, alternating on the axis of rotation; and each involution fixes two vertices of $V$ and two of $W$, alternating on the axis of rotation.

\medskip
Now that we have embedded the vertices in every case, it only remains to use the Edge Embedding Lemma to embed the edges, and then the Subgroup Theorem to find an embedding whose topological symmetry group is isomorphic to $A_5$.  These arguments are almost identical to those in Proposition \ref{P:S4Cube}, and the details are left to the reader.
\end{proof}

\bigskip
Combining Theorem \ref{T:A5necessity}, Lemma \ref{L:n>30} and Proposition \ref{P:A5Dodecahedron}, we obtain:

\begin{A5} \label{T:A5}
$K_{n,n}$ has an embedding $\Gamma$ with $A_5 \cong \TSG(\Gamma)$ if and only if $n > 30$ and $n \equiv 0, 2, 12, 20, 30, 32, 42 {\rm \ or\ } 50 \pmod{60}$.
\end{A5}

\bigskip

\subsection*{Acknowledgments}  The author would like to thank Erica Flapan and Ramin Naimi for many helpful conversations, and for their comments on early drafts of this paper.

\small

\normalsize

\end{document}